\theoremstyle{definition}
\newtheorem*{definition*}{Definition}
\newtheorem{definition}{Definition}[section]
\newtheorem{lemma}[definition]{Lemma}
\newtheorem{theorem}[definition]{Theorem}
\newtheorem{fact}[definition]{Fact}
\newtheorem{remark}[definition]{Remark}
\newtheorem{corollary}[definition]{Corollary}
\DeclareMathOperator{\cf}{cf}
\let \succ \relax
\DeclareMathOperator{\succ}{succ}
\DeclareMathOperator{\supp}{supp}
\newcommand{\nunlhd}{%
	\mathrel{\ooalign{$\lneq$\cr\raise.22ex\hbox{$\lhd$}\cr}}}
\DeclareMathOperator{\dom}{dom}
\DeclareMathOperator{\splitt}{split}
\DeclareMathOperator{\htt}{ht^s}
\DeclareMathOperator{\cof}{cof}
\DeclareMathOperator{\ran}{ran}
\title{Strong Measure Zero Sets on $2^\kappa$ for $\kappa$ Inaccessible}
\author{Nick Steven Chapman and Johannes Philipp Sch\"urz \footnote{This author was generously supported by FWF project I3081.}}
\date{}
\begin{document}
\maketitle
\vspace{20pt}
\begin{abstract}
	We investigate the notion of strong measure zero sets in the context of the higher Cantor space $2^\kappa$ for $\kappa$ at least inaccessible. Using an iteration of perfect tree forcings, we give two proofs of the relative consistency of
	\[
	|2^\kappa| = \kappa^{++} + \forall X \subseteq 2^\kappa:\ X \text{ is strong measure zero if and only if } |X| \leq \kappa^+.
	\]
	Furthermore, we also investigate the stronger notion of stationary strong measure zero and show that the equivalence of the two notions is undecidable in ZFC.
\end{abstract}

\section*{Introduction}

In searching for a useful notion related to being a Lebesgue measure zero set, Borel \cite{borel_def} introduced strong measure zero sets.

\begin{definition*}
	A subset $X$ of the real line is strong measure zero iff for any sequence $(\varepsilon_n)_{n \in \omega}$ of positive real numbers there exists a sequence of intervals $(I_n)_{n \in \omega}$ with $\lambda(I_n) \leq \varepsilon_n$ and $X \subseteq \bigcup_{n \in \omega} I_n$.
\end{definition*}

Clearly, strong measure zero sets are measure zero and every countable set is strong measure zero. Moreover, it is also easy to see that perfect sets cannot be strong measure zero. It was conjectured by Borel that countability is perhaps the only constraint on strong measure zero sets, giving rise to the Borel Conjecture (BC): ``A set $X$ is strong measure zero if and only if $X$ is countable.''

In 1928, Sierpi\'nski \cite{sier} showed that CH implies the existence of uncountable strong measure zero sets (specifically, he showed that any Luzin set is strong measure zero). It was not until after the advent of Cohen's revolutionary technique of forcing that Laver \cite{laver_classic_proof} established the relative consistency (and thus independence from ZFC) of BC.

Over the years, investigations into matters related to strong measure zero sets (such as the interplay between BC and the size of the continuum \cite{bc_large_cont}, the dual notion of strongly meager sets \cite{dual_bc} and others) became testament to the fact that Borel's notion was indeed worthy of interest.

For our purposes the most interesting of these is Corazza's proof of the consistency of ``a set is strong measure zero iff it has size less than continuum'' (\cite{corazza}) in which he employs an $\omega_2$-length iteration of strongly proper forcings (a notion stronger than ``proper + $\omega^\omega$-bounding'' that includes well-known forcings such as Sacks and Silver), together with a previous result of Miller \cite{miller} to construct a model with 
\begin{quote}
	\centering ``Every set of reals of size continuum can be mapped uniformly continuously onto $[0,1]$''. 
\end{quote} 

We are interested in a version of Borel's Conjecture on higher cardinals $\kappa$. The higher Cantor space $2^\kappa$ and the higher Baire space $\kappa^\kappa$ come equipped with the standard ${<}\kappa$-box topology; see \cite{khomskii_kappa_reals} for basic properties of these spaces. Their elements are called $\kappa$-reals, or simply reals. Note that near universally, the assumption $\kappa^{<\kappa} = \kappa$ is made in discussions on the higher Baire space, without which the space exhibits some undesirable topological properties (see \cite[\S 2.1.]{higher_descriptive}). Especially in recent years, renewed interest has sparked among set theorists in studying these spaces; a compendium of open questions can be found in \cite{khomskii_questions}.

The following definition is due to Halko \cite{halko}:

\begin{definition*} 
	Let $X \subseteq 2^\kappa$. We call $X$ strong measure zero iff
	\[
	\forall f\in \kappa^\kappa \, \exists (\eta_i)_{i < \kappa}:\ \big (\, \forall i < \kappa:\ \eta_i \in 2^{f(i)} \, \big ) \land  X \subseteq \bigcup_{i< \kappa} [\eta_i].
	\]
\end{definition*}
This is a straightforward combinatorial reformulation (here $[\eta]$ is a basic clopen set as defined in the next section) of Borel's definition that is agnostic towards the existence of a measure on $2^\kappa$. Let $\mathcal{SN}$ be the collection of all strong measure zero sets; it is easy to see that $\mathcal{SN}$ is a proper, ${\leq}\kappa$-complete ideal (see also Lemma \ref{lem: cover unboundedly}) on $2^\kappa$ containing all singletons. 

The Borel Conjecture on $\kappa$ (BC($\kappa$)) is the statement ``a subset of $2^\kappa$ is strong measure zero iff it has cardinality ${\leq}\kappa$''. Strong measure zero sets for $\kappa$ regular uncountable have been studied in \cite{halko_shelah}, where the authors have proven that BC($\kappa$) is false for successor $\kappa$ satisfying $\kappa^{<\kappa} = \kappa$. 

Throughout this paper we shall restrict our attention to $\kappa$ at least inaccessible, thus in particular $\kappa^{<\kappa} = \kappa$. The question of the consistency of BC($\kappa$) on such $\kappa$ is still open \cite{khomskii_questions}. An argument similar to the one in the proof of Theorem \ref{th: easy inclusion aux} - forgoing the notion of $\kappa^\kappa$-bounding and focusing instead directly on encoding coverings within the $\kappa$-Cohen real - tells us that a $\kappa^{++}$-c.c. forcing iteration of length $\kappa^{++}$ in which $\kappa$-Cohen reals are added cofinally will necessarily yield large strong measure zero sets - in fact, the set of ground model $\kappa$-reals will become strong measure zero. Unfortunately, by the results in \cite{laver_trees_baire_space}, every Laver-like tree forcing on $\kappa^\kappa$ necessarily adds a $\kappa$-Cohen real. Any treatment of the consistency of BC$(\kappa)$ thus cannot be merely a straightforward adaptation of Laver's results; potentially, a wholly new approach is needed here.

We shall give two proofs establishing the relative consistency of
\[
\text{ZFC} + |2^\kappa| = \kappa^{++} + \mathcal{SN} = [2^\kappa]^{\leq \kappa^+},
\]
the first of which is an adaptation of an iteration found in \cite{smz_no_cohen} and requires $\kappa$ to be strongly unfoldable (a large cardinal property between weakly compact and Ramsey that is consistent with $V=L$). The second, somewhat better, proof only requires $\kappa$ to be inaccessible and employs the same iteration by establishing minimality of the respective forcing extension, following the approach of Corazza \cite{corazza}.

Last but not least, we would like to thank Martin Goldstern for fruitful discussions during the preparation of this paper. We would also like to thank the anonymous referee for their many valuable remarks and suggestions that helped substantially improve the presentation of the paper.

\section*{Notation and Basic Definitions}

Let us make some preliminary remarks.

The higher Cantor space $2^\kappa$ is equipped with the standard ${<}\kappa$-box topology, whose base consists of the basic clopen sets 
\[
[\eta] := \{b \in 2^\kappa: \eta \lhd b\}
\]
for $\eta \in 2^{<\kappa}$; for the higher Baire space $\kappa^\kappa$ the topology is defined analogously. The relation $\eta \lhd \nu$ denotes the extension relation for sequences, i.e. $\eta = \nu {\restriction} i$ for some $i \leq \dom(\nu)$. The relation $\eta \bot \nu$ denotes incompatibility, i.e. $\eta \ntriangleleft \nu$ and $\nu \ntriangleleft \eta$.

A ($\kappa$-) tree is a subset of $\kappa^{<\kappa}$ closed under initial segments.

Let $T \subseteq \kappa^{<\kappa}$ be a tree and $\eta \in T$. Then we define the following notions:
\begin{itemize}
	\item A $b \in \kappa^\kappa$ is a \textit{branch} of $T$ iff $b {\restriction} i \in T$ for all $i < \kappa$. Let $[T]$ denote the set of all branches of $T$.
	\item Denote by $\succ_T(\eta)$ the set of immediate successors of $\eta$ in $T$. Call $\eta$ a \textit{splitting node} of $T$ iff $|\succ_T(\eta)| > 1$. Denote the set of all splitting nodes of $T$ as $\splitt(T)$. We will only consider trees in which every node has a successor.
	\item $T$ is \textit{perfect} iff for every $\eta \in T$ there is a $\nu$ such that $\eta \lhd \nu$ and $\nu \in \splitt(T)$. Note that for $\kappa \neq \omega$ this is not equivalent to $[T]$ containing a homeomorphic copy of $2^\kappa$, even if every node of the tree lies on a branch (of length $\kappa$).
	\item The \textit{splitting height} $\htt_T(\eta)$ of a node $\eta$ is the order type of the set $\{\nu \nunlhd \eta:\ \nu \in \splitt(T)\}$. Additionally, for $i < \kappa$, define 
	\[
	\splitt_i(T) := \{ \eta \in \splitt(T): \htt_T(\eta) =i \}.
	\] 
	\item As usual, the set of branches of a tree is a closed set and every closed set $Y$ can be represented as the set of branches of the tree $T = \{b {\restriction} i:\ i < \kappa \land b \in Y\}$. However, it may be the case that this tree $T$ necessarily contains \textit{dying branches}, i.e. $T$ might contain an increasing sequence $(\eta_i)_{i < \lambda}$ with $\lambda < \kappa$ whose limit $\bigcup_{i < \lambda} \eta_i$ is not an element of $T$ \footnote{Consider for example the closed set $2^\kappa \backslash [\eta]$, where $\eta \in 2^\omega$.}. This phenomenon is unique to the $\kappa$-case and has no $\omega$-equivalent. 
	
	We say $T$ (or $[T]$) is \textit{superclosed} iff this does not happen, meaning that whenever $\lambda < \kappa$ is a limit ordinal and $\eta \in \kappa^\lambda$, then $\eta \in T \Leftrightarrow \forall i < \lambda:\ \eta {\restriction} i \in T$. \label{def: superclosed}
\end{itemize}

We shall attempt to, wherever feasible, adhere to certain self-imposed notational conventions. In this vein, the letters $i,j,k,\ell,m$ will generally refer to ordinals ${<}\kappa$; $\delta, \lambda$ to limit ordinals ${\leq}\kappa$ and $\alpha, \beta, \gamma, \zeta$ to ordinals ${\leq} \kappa^{++}$. The letters $p,q,r,s,t$ denote conditions while $\eta, \nu, \rho$ are elements of $\kappa^{<\kappa}$. The pair $F,i$ will always fulfil $F \in [\alpha]^{<\kappa}, i < \kappa$, where $\alpha \leq \kappa^{++}$ is either explicitly given or clear from context.
\clearpage

\section{Perfect Tree Forcing}

We are interested in a particular forcing consisting of ${<}\kappa$-splitting perfect trees whose splitting is bounded by an $f \in \kappa^\kappa$ with $f(i) \geq 2$ for all $i < \kappa$.
\begin{definition}
	Let $p \in PT_f$ iff 
	\begin{enumerate}[label=(S\arabic*)]
		\item $p \subseteq \kappa^{< \kappa}$ is a nonempty tree
		\item $p$ is perfect \label{ax: splitting unbounded} 
		\item $\forall \eta \in p \, \forall i \in \text{dom} (\eta):\ \eta(i) < f(i)$
		\item $p$ has full splitting: $\forall \eta \in p \colon \vert \succ_p (\eta) \vert =1 \vee 
		\succ_p (\eta)= \{\eta ^\frown j \colon j < f(\text{dom}\, \eta)\}$ \label{ax: full splitting}
		\item $p$ is superclosed \label{ax: superclosed}
		\item splitting is continuous: If $\lambda < \kappa$ is a limit, then \\ $\forall \eta \in \kappa^{\lambda} \cap p:\ \{\nu \nunlhd \eta \colon \nu \in \splitt(p)\} \text{ is unbounded in } \eta \Rightarrow \eta \in \splitt(p)$ \label{ax: splitting continuous}
	\end{enumerate}
\end{definition} 

The significance of \ref{ax: full splitting} and \ref{ax: splitting continuous} lies in ensuring ${<}\kappa$-closure of the forcing (see Lemma \ref{lem: <k closed}). The axioms \ref{ax: full splitting} and \ref{ax: superclosed} guarantee that for all $\eta \in p$ we have 
\[
[\eta] \cap [p] \neq \emptyset,
\]
i.e. there is a branch of $p$ going through $\eta$. Under the other axioms, \ref{ax: splitting unbounded} + \ref{ax: splitting continuous} is equivalent to the following statement: whenever $b \in [p]$ is a branch of $p$, then
\[
\{i < \kappa: b{\restriction} i \in \splitt(p)\}
\]
is a club subset of $\kappa$. 

For $f \equiv 2$ we have a $\kappa$-version of Sacks forcing, first studied by Kanamori \cite{kanamori}. An overview of variants of familiar forcing notions on higher cardinals can be found in \cite{khomskii_kappa_reals}.

The rest of this section is devoted to proving some regularity properties for $PT_f$, generalized straightforwardly from the classical treatment of similar tree forcings on $\omega^\omega$.

Set $q \leq_{PT_f} p$ iff $q \subseteq p$. For a $PT_f$-generic filter $G$ define the generic real $s_G$ to be the unique real contained in $\bigcap_{p \in G} [p]$.

\begin{fact}
	For a condition $p \in PT_f$ the set $\splitt_i(p)$ is a front in $p$, i.e. it is an antichain in $(p, \lhd)$ with
	\[
	\forall b \in [p]: |b \cap \splitt_i(p)| = 1.
	\]
	Call it the $i$-th splitting front of $p$. 
\end{fact}

\begin{lemma}
	Let $i < \kappa$ and $p \in PT_f$ be a condition. Then $|\splitt_i(p)| < \kappa$.
\end{lemma}

\begin{proof}
	We proceed by induction on $i$: 
	\begin{itemize}
		\item $i = 0$: Trivial.
		\item $i \to i+1$: The map $\eta \mapsto \min\{\nu \lhd \eta: \htt_p(\nu) = i+1\}$ is bijection between $\splitt_{i+1}(p)$ and $\bigcup_{\eta \in \splitt_i(p)} \succ_p(\eta)$. By the inductive hypothesis and the fact that $p$ is ${<}\kappa$-splitting, the latter set has size $<\kappa$.
		\item $\lambda$ is a limit: Since every $\eta \in \splitt_\lambda(p)$ is the limit of a sequence $(\eta_j)_{j < \lambda}$ with $\eta_j \in \splitt_j(p)$, we have $|\splitt_\lambda(p)| \leq |\prod_{j < \lambda} \splitt_j(p)| < \kappa$ by the inaccessibility of $\kappa$.
	\end{itemize}
\end{proof}

\begin{definition}
	Let $(\mathcal{P}, \leq_\mathcal{P})$ be a forcing notion and $(\leq_i)_{i <\kappa}$ be a sequence of reflexive and transitive binary relations on $\mathcal{P}$ such that
	\[
	\forall j < i < \kappa \colon (\leq_i) \, \subseteq \, (\leq_j) \, \subseteq (\leq_\mathcal{P}).
	\]
	Then
	\begin{enumerate}
		\item $(p_j)_{j<\delta}$ is a \textit{fusion sequence} of length $\delta \leq \kappa$ iff $\forall j < k < \delta:  p_k \leq_j p_j$.
		\item $\mathcal{P}$ has \textit{Property B} iff
		\begin{itemize}
			\item $(\mathcal{P}, \leq_\mathcal{P})$ is ${<}\kappa$-closed.
			\item Whenever $(p_j)_{j<\delta}, \delta \leq \kappa$ is a fusion sequence in $\mathcal{P}$, then there exists a \textit{fusion limit} $q$ with $\forall j < \delta: q \leq_j p_j$.
			\item If $A$ is a maximal antichain, $p \in \mathcal{P}$ and $i < \kappa$, then there exists a $q \leq_ i p$ such that $A {\restriction} q:=\{ r \in A \colon r \parallel q\}$ has size ${<}\kappa$, where $\parallel$ means compatible. 
		\end{itemize}
	\end{enumerate}
\end{definition}

Equivalently, we can demand the third condition to hold for all antichains $A$, by enlarging them to maximal antichains if necessary. Note that by weakening the third requirement to $|A {\restriction} q| \leq \kappa$, we get a $\kappa$-version of Baumgartner's Axiom A. Property B is thus a variant of Axiom A combined with the notion of being $\kappa^\kappa$-bounding \cite[Def. 7.2.C]{barto_judah}; it is well-known from the countable context that many standard tree forcings, such as Sacks and Silver forcing, have this property.

\begin{lemma}
	Property B implies $\kappa^\kappa$-bounding.
\end{lemma}

\begin{proof}
	Assume $p \Vdash \dot{g} \in \kappa^\kappa$ and $\dot{g}(i)$ is decided by an antichain $A_{i+1}$ for every $i < \kappa$. Construct a fusion sequence $(q_{i})_{i < \kappa}$ below $p$ by setting $q_0 := p$ and finding a $q_{i+1} \leq_i q_i$ with $|A_{i+1} {\restriction} q_{i+1}| < \kappa$ in successor steps. In limit steps $\lambda$, set $q_\lambda$ to be a fusion limit of $(q_i)_{i < \lambda}$. The fusion limit $q_\kappa$ of the whole sequence will force $q_\kappa \Vdash \dot{g} \leq \check{h}$ for some $h \in \kappa^\kappa$ in the ground model.
\end{proof}

\begin{lemma} \label{lem: <k closed}
	$PT_f$ is ${<}\kappa$-closed.
\end{lemma}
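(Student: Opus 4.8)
The plan is to show that every descending sequence of conditions of limit length $\lambda<\kappa$ has the natural greatest lower bound, namely the intersection. So fix a limit $\lambda<\kappa$ and a sequence $\langle p_\alpha : \alpha<\lambda\rangle$ with $p_\beta\subseteq p_\alpha$ whenever $\alpha\le\beta$, and set $q:=\bigcap_{\alpha<\lambda}p_\alpha$. Since $q\subseteq p_\alpha$ for every $\alpha$, it suffices to prove $q\in\text{PT}_f$; it is then automatically a lower bound. Several axioms are immediate: P2 is inherited from any single $p_\alpha$, the tree part of P1 holds because an intersection of trees is a tree, and P5 is a routine interchange of the two universal quantifiers ($\forall i$ and $\forall\alpha$) combined with P5 for each $p_\alpha$.

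For the non-triviality in P1, i.e.\ $q\neq\emptyset$, I would track the trunks. Since each $p_\alpha$ is perfect it has a unique $\subseteq$-least splitting node (its trunk) $t_\alpha$, and nesting forces $\langle t_\alpha\rangle$ to be $\subseteq$-increasing. As $\kappa$ is inaccessible, hence regular, and $\lambda<\kappa$, the ordinals $\text{dom}(t_\alpha)$ stay bounded below $\kappa$, so $t:=\bigcup_\alpha t_\alpha\in\kappa^{<\kappa}$. Using P5 for each $p_\alpha$ one checks that every proper initial segment of $t$ lies in $p_\alpha$, whence $t\in q$.

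The real content is P4 (and, closely related, P6): I must show that splitting \emph{survives} in the intersection --- precisely the point at which the classical ($\kappa=\omega$) Sacks-type forcing fails to be closed, and where the limit-splitting axiom P6 is indispensable. The first key observation is a characterization of splitting in $q$: since $\text{succ}_q(\nu)\subseteq\text{succ}_{p_\alpha}(\nu)$, any splitting node of $q$ splits in every $p_\alpha$; and because by P3 each $\text{succ}_{p_\alpha}(\nu)$ is either a singleton or the full fan, the nested intersection defining $\text{succ}_q(\nu)$ is again a singleton (once a singleton appears it persists, as no node is a leaf) or the full fan. This yields P3 for $q$ and shows that $\nu$ splits in $q$ iff it splits fully in every $p_\alpha$. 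To verify P4, fix $\eta\in q$ and let $s_\alpha\supseteq\eta$ be the $\subseteq$-least splitting node of $p_\alpha$ above $\eta$. Nesting forces $\langle s_\alpha\rangle$ to be $\subseteq$-increasing and, crucially, each $s_\alpha$ splits in $p_\gamma$ for all $\gamma\le\alpha$. Put $s:=\bigcup_\alpha s_\alpha$; as before $\text{dom}(s)<\kappa$ by regularity and $s\in q$ by P5. Now for each fixed $\beta$ the nodes $s_\alpha$ with $\alpha\ge\beta$ are splitting nodes of $p_\beta$ cofinal in $s$, so P6 applied to $p_\beta$ and $s$ gives $\vert\text{succ}_{p_\beta}(s)\vert>1$ (if the sequence stabilizes this is immediate, and otherwise $\text{dom}(s)$ is a limit, so P6 is applicable). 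Since this holds for every $\beta$, the characterization above gives that $s$ splits in $q$, proving P4. Finally P6 for $q$ follows the same pattern: if the $q$-splitting nodes below some $\eta\in q$ are unbounded in $\eta$, then by the characterization they are $p_\alpha$-splitting for every $\alpha$, so P6 for each $p_\alpha$ forces $\eta$ to split in every $p_\alpha$, hence in $q$.

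I expect the \emph{main obstacle} to be exactly this survival-of-splitting argument for P4: one must resist taking a single branch of splitting nodes and instead take the correct $\subseteq$-increasing sequence of first-split nodes $s_\alpha$, and then invoke P6 at the limit node to recover splitting --- without P6 the intersection would typically collapse to a single branch.
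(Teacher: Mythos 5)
Your proposal is correct and follows exactly the paper's approach: the paper's proof is the one-line instruction to check that $p:=\bigcap_{j<\lambda}p_j$ is a condition, and you carry out precisely that verification, correctly identifying the P3 dichotomy (splitting means the \emph{full} fan, so splitting in every $p_\alpha$ survives intersection) and P6 at limit nodes as the essential points for P4. The only stylistic remark is that nonemptiness is immediate since the root $\langle\rangle$ lies in every $p_\alpha$, so the trunk-tracking argument there is unnecessary.
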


\begin{proof}
	If $(p_i)_{i < \delta}$ with $\delta < \kappa$ is a decreasing sequence, set $q := \bigcap_{i < \delta} p_i$. We check that $q$ is a condition; only \ref{ax: splitting unbounded} is nontrivial, so we assume that all other axioms hold.
	
	Let thus $\eta \in q$. For some $b \in [q]$ with $\eta \lhd b$ (recall that by \ref{ax: full splitting} + \ref{ax: superclosed} such a $b$ exists) consider the sets 
	\[
	C_i := \{j < \kappa : b {\restriction} j \in \splitt(p_i)\}.
	\]
	By \ref{ax: splitting unbounded} and \ref{ax: splitting continuous}, $C_i$ is a club subset of $\kappa$. Thus $\bigcap_{i < \delta} C_i$ is a club and yields a $\nu$ with $\eta \lhd \nu$ and $\nu \in \splitt(q)$.
\end{proof}

\begin{remark}
	Clearly, the intersection $\bigcap_{i < \delta} p_i$ in the previous lemma is simultaneously also the greatest lower bound of the decreasing sequence $(p_i)_{i < \delta}$, $\delta < \kappa$.
\end{remark}

\begin{definition}
	For $p,q \in PT_f$, define $q \leq_i p$ iff $q \leq_{PT_f} p$ and $\splitt_i(p) = \splitt_i(q)$.
\end{definition}

\begin{fact}
	The following are equivalent:
	\begin{enumerate}
		\item $q \leq_i p$
		\item $q \leq_{PT_f} p$ and $\forall j \leq i: \splitt_j(p) = \splitt_j(q)$
		\item $q \leq_{PT_f} p$ and $\forall \eta \in p: \htt_p(\eta) \leq i \Rightarrow \succ_p(\eta) \subseteq q$
		\item $q \leq_{PT_f} p $ and $\splitt_{i+1}(p) \subseteq q$
	\end{enumerate}
\end{fact}

It remains to prove that equipped with these relations, $PT_f$ has Property B.

\begin{lemma}
	For every fusion sequence $(p_j)_{j<\delta}$ of length $\delta \leq \kappa$ in $PT_f$ there exists a $q$ with $\forall j < \delta: q \leq_j p_j$.
\end{lemma}

\begin{proof}
	If $\delta < \kappa$, the intersection $q$ from Lemma \ref{lem: <k closed} can be seen to also be a fusion limit. 
	
	Otherwise once again set $q = \bigcap_{j < \kappa} p_j$ and follow the proof of Lemma \ref{lem: <k closed}; along a branch $b \in [q]$ again define the sets 
	\[
	C_j := \{\ell < \kappa : b {\restriction} \ell \in \splitt(p_j)\}.
	\]
	By using the fact that $(p_j)_{j < \kappa}$ is a fusion sequence, one can arrive at
	\[
	\left(\Delta_{j < \kappa} C_j\right) \cap \{\lambda < \kappa: \lambda \text{ limit}\} \subseteq \bigcap_{j < \kappa} C_j,
	\]
	which is enough to conclude that $\bigcap_{j < \kappa} C_j$ is also a club by the closure of the club filter under diagonal intersections. It can easily be seen that $q$ is a fusion limit.
\end{proof}

Before concluding the proof, we first give two definitions which will come in handy later in the iteration context.

\begin{definition}
	For a condition $p \in PT_f$ and $\eta \in p$, define $p^{[\eta]} := \{\nu \in p: \nu \lhd \eta \vee \eta \lhd \nu\}$.
\end{definition}

One can see easily that $p^{[\eta]}$ is a stronger condition than $p$ and that for any $i < \kappa$ we have $p = \bigcup_{\eta \in \splitt_i(p)} p^{[\eta]}$. 

\begin{definition} \label{def:choice set}
	Let $p \in PT_f$ be a condition and $i < \kappa$. We say that a condition $s \in PT_f$ is \textit{$(p,i)$-determined} iff $s \leq p$ and
	\[
	|s \cap \splitt_i(p)| = 1.
	\]
\end{definition}

\begin{lemma} \label{lem:choice set dense}
	The set of $(p,i)$-determined conditions is dense below $p$ for all $i$.
\end{lemma}

\begin{proof}
	For any $r \leq p$ we may extend the stem of $r$ in the following way: take any branch $b \in [r] \subseteq [p]$; since we then know $|b \cap \splitt_{i}(p)| = 1$, we see that there is a unique $\nu$ with $\nu \in b \cap r \cap \splitt_i(p)$. Then $r^{[\nu]}$ is $(p,i)$-determined. 
\end{proof}

\begin{theorem}
	$PT_f$ has Property B.
\end{theorem}

\begin{proof}
	It remains to show the antichain condition. To this end, let $A$ be a maximal antichain, $p \in PT_f$ and $i < \kappa$. Enumerate $\splitt_{i+1}(p)$ as $(\eta_j)_{j < \delta}$ with $\delta < \kappa$. We will decompose $p$ into $|\delta|$ many parts, each of which will be thinned out above the $(i+1)$-th splitting front.
	
	Proceed by finding for each $j < \delta$ a condition $s_j \leq p^{[\eta_j]}$ such that $|A {\restriction} s_j| = 1$. Set 
	\[	
	q := \bigcup_{j < \delta} s_j.
	\]
	Then $q \in PT_f$ is a condition with $\splitt_{i+1}(p) \subseteq q$ and thus $q \leq_i p$. To prove $|A {\restriction} q| < \kappa$, let $r \in A$ be compatible with $q$. By the previous lemma we may pick a $t_r$ that is $(p,i+1)$-determined with $t_r \leq r,q$ and hence $t_r \cap \splitt_{i+1}(p) = \{\eta_{j_r}\}$ for some $j_r < \delta$. But since $t_r \leq q$, we can conclude $t_r \leq s_{j_r}$ and thus $r \parallel s_{j_r}$. We have thus found a function from $A {\restriction} q$ to $\delta$, mapping $r \mapsto j_r$, which is injective (since $|A {\restriction} s_j| = 1$ for all $j < \delta$). The desired conclusion $|A {\restriction} q| < \kappa$ follows.
\end{proof}

\section{The Iteration}

The backbone of our forcing construction will consist of an iteration of $PT_f$ forcings. Let therefore $\langle \mathbb{P}_\alpha, \dot{\mathbb{Q}}_\beta : \alpha \leq \kappa^{++}, \beta < \kappa^{++} \rangle$ be a ${\leq}\kappa$-supported forcing iteration with 
\[
\Vdash_{\mathbb{P}_\alpha} \dot{\mathbb{Q}}_\alpha = PT_{f_\alpha}
\]
where the sequence $(f_\alpha)_{\alpha < \kappa^{++}}$ is in the ground model and $f_\alpha(i) \geq 2$ for all $i < \kappa$. Set $\mathbb{P} := \mathbb{P}_{\kappa^{++}}$.

As a matter of notation, let $\dot{G}_\alpha$ for $\alpha \leq \kappa^{++}$ denote the canonical $\mathbb{P}_\alpha$-name for a $\mathbb{P}_\alpha$-generic filter; we also write $\dot{G}$ for $\dot{G}_{\kappa^{++}}$. Finally, let $\dot{s}_\alpha$ be the canonical name for the $\alpha$-th generic real.

This section is dedicated to verifying some regularity properties of such iterations. We will observe that
\begin{enumerate}
	\item $\mathbb{P}$ is ${<}\kappa$-closed
	\item $\mathbb{P}$ does not collapse $\kappa^+$
	\item if $V \models |2^\kappa| = \kappa^+$, then $\mathbb{P}$ has the $\kappa^{++}$-c.c.,
\end{enumerate}
thus in aggregate no cardinals are collapsed when forcing with $\mathbb{P}$.

\begin{fact}
	$\mathbb{P}$ is ${<}\kappa$-closed.
\end{fact}

In the countable case, the favoured tool one would look towards in the endeavour of preserving $\omega_1$ is the notion of properness. Finding a satisfactory analogue for higher cardinals is a long-standing open problem (see e.g. \cite{kappa_proper1} and \cite{kappa_proper2}). A relatively straightforward generalization that still enjoys many desirable qualities of properness is the following:

\begin{definition}
	A forcing $\mathcal{P}$ is called $\kappa$-proper iff for every sufficiently large $\theta$ (e.g. $\theta > \vert 2^{\mathcal{P}}\vert$) and every elementary submodel $M \preccurlyeq H(\theta)$ such that $\mathcal{P} \in M$, $|M| = \kappa$ and $^{<\kappa}M \subseteq M$, and every $p \in \mathcal{P} \cap M$, there exists $q \leq_\mathcal{P} p$ such that for every dense $D \in M$, $D \cap M$ is predense below $q$.
\end{definition}

\begin{fact}
	Forcing notions that are ${<}\kappa^+$-closed or have the $\kappa^+$-c.c. are $\kappa$-proper. Furthermore, $\kappa$-proper forcing notions do not collapse $\kappa^+$.
\end{fact}

Further details on $\kappa$-properness can be found in \cite{khomskii_kappa_reals}.

Unfortunately, in stark contrast to the classical setting, there is no preservation theorem for $\kappa$-properness in iterations (see \cite{roslanowski} for an iteration of $\kappa^+$-c.c. forcings whose $\omega$-limit collapses $\kappa^+$). Our strategy for ensuring $\kappa$-properness is to verify an iteration version of Property B. Similar to fusion with countable support, in such cases the correct tool is the following notion: 

\begin{definition}
	For $\zeta \leq \kappa^{++}$ let $\langle \mathcal{P}_\alpha, \dot{\mathcal{Q}}_\beta : \alpha \leq \zeta, \beta < \zeta \rangle$ be a ${\leq}\kappa$-support iteration with 
	\[
	\forall \alpha< \zeta \colon  \Vdash_{\alpha} `` \ \dot{Q}_\alpha \text{ has Property B ''}.
	\]
	Let $F \in [\zeta]^{<\kappa}$ and $i < \kappa$. We define $ q \leq_{F,i} p$ iff 
	\[
	q \leq_{\mathcal{P}_{\zeta}} p \text{ and } \forall \beta \in F \colon q {\restriction} \beta \Vdash_\beta q(\beta) \,  \leq_i^{\dot{\mathcal{Q}}_\beta} \, p(\beta).
	\]
	Then
	\begin{enumerate}
		\item A sequence $\langle p_i, F_i: i < \delta \rangle$ of length $\delta \leq \kappa$ is called a fusion sequence iff		
		\begin{itemize}
			\item $\forall j < k < \delta: p_k \leq_{F_j,j} p_j$
			\item The $F_j$ are increasing and, if $\delta = \kappa$, then $\bigcup_{j < \delta} \text{supp}(p_j) \subseteq \bigcup_{j < \delta} F_j$.
		\end{itemize}
		\item We say that $\mathcal{P}_{\zeta}$ has \textit{Property B*} iff
		\begin{itemize}
			\item For every fusion sequence $\langle p_i, F_i: i < \delta \rangle,\ \delta \leq \kappa$ there exists a fusion limit $q$ with $\forall j < \delta: q \leq_{F_j,j} p_j$.
			\item For every maximal antichain $A$, every $p \in \mathcal{P}_{\zeta}, F \in [\zeta]^{<\kappa}$ and $i<\kappa$ there exists a $q \leq_{F,i} p$ such that $|A {\restriction} q| < \kappa$. \label{def:antichain condition}
		\end{itemize}
	\end{enumerate}
\end{definition}
Hence for iterations we consider fusion sequences pointwise, with the added caveat of being able to delay fusion arbitrarily long in each coordinate. In practice, the auxiliary sets $F_j$ will almost always be defined by a bookkeeping argument relative to the $p_j$.

\begin{fact} \label{fact: b* implies proper and bounding}
	Property B* implies $\kappa$-properness and $\kappa^\kappa$-bounding.
\end{fact}

In the definition of Property B*, only the antichain condition is nontrivial. In fact, for such iterations of Property B forcings, fusion limits always exist.
\begin{lemma}
	With the notation from the previous definition, every fusion sequence $\langle p_i, F_i: i < \delta \rangle,\ \delta \leq \kappa$ in $\mathcal{P}_{\zeta}$ has a fusion limit $q$. \label{lem: iterated fusion limits exist}
\end{lemma}

\begin{proof}
	We construct $q$ inductively such that $\mathcal{P}_\alpha \ni q {\restriction} \alpha$ is a fusion limit of $\langle p_i {\restriction} \alpha , F_i \cap \alpha : i < \delta \rangle$ for each $\alpha \leq \zeta$.
	
	Assume $q {\restriction} \alpha$ has been defined for $\alpha < \zeta$. To define $q(\alpha)$, distinguish three cases:
	\begin{itemize}
		\item $\alpha \in \bigcup_{j < \delta} \supp(p_j) \wedge \alpha \in \bigcup_{j < \delta} F_j$: Find $j^*(\alpha)$ minimal such that $\alpha \in F_{j^*(\alpha)}$. Now $q {\restriction} \alpha \Vdash `` (p_j(\alpha))_{j \geq j^*(\alpha)} \text{ is a fusion sequence''}$, so let $q(\alpha)$ be a fusion limit of that sequence.
		\item $\alpha \in \bigcup_{j < \delta} \supp(p_j) \wedge \alpha \notin \bigcup_{j < \delta} F_j$: Note that this case may only occur for $\delta < \kappa$, thus we may use ${<}\kappa$-closure of $\dot{\mathcal{Q}}_\alpha$ to construct $q(\alpha)$ from $(p_j(\alpha))_{j < \delta}$.
		\item $\alpha \notin \bigcup_{j < \delta} \supp(p_j)$: Set $q(\alpha) := \mathbbm{1}_{\dot{\mathcal{Q}}_\alpha}$.
	\end{itemize}
	
	To see that $q {\restriction} \gamma \in \mathcal{P}_\gamma$ for limit $\gamma$, merely note $\supp(q {\restriction} \gamma) \subseteq \bigcup_{i < \delta} \supp(p_i {\restriction} \gamma)$.
\end{proof}

\begin{remark} \label{rem: weakest fusion limit}
	Note that the forcings $\dot{\mathcal{Q}}_\alpha = PT_{f_\alpha}$ fulfil ${<}\kappa$-closure and the existence of fusion limits in a particularly strong way: in either case, a canonical weakest lower bound/fusion limit exists. Thus by following the above proof and choosing these canonical conditions, we can see that an iteration of $PT_f$ forcings also fulfils a stronger fusion condition: for every fusion sequence there exists a canonical, weakest fusion limit.
\end{remark}

Some work remains to prove the antichain condition for $\mathbb{P}_\zeta$, which we do in a rather ad hoc manner by induction on $\zeta$. On the way we will introduce some notation that will also come in handy later.

First off, let us define the iteration version of Definition \ref{def:choice set} and the corresponding density lemma.
\begin{definition}
	Let $\zeta \leq \kappa^{++}, p \in \mathbb{P}_\zeta, F \in [\zeta]^{<\kappa}$ and $i < \kappa$. We say a condition $s \in \mathbb{P}_\zeta$ is $(p,F,i)$\textit{-determined} following $g \in \prod_{\beta \in F} \kappa^{<\kappa}$ iff $s \leq_{\mathbb{P}_\zeta} p$ and
	\begin{gather*}
		\forall \beta \in F \, \exists \eta_\beta \in \kappa^{<\kappa}: \\
		s {\restriction} \beta \Vdash s(\beta) \cap \splitt_{i}(p(\beta)) = \check{\{\eta_\beta\}} \wedge \succ_{s(\beta)}(\eta_\beta) = \check{\{g(\beta)\}}.
	\end{gather*}
	
	We say a condition $s$ is $(p,F,i)$-determined iff it is $(p,F,i)$-determined following some (unique) $g$.
\end{definition}

The function $g$ prescribes the choices $s$ makes at the $i$-th splitting front of $p$; it is completely determined by $s$. 

\begin{lemma}
	The set of $(p,F,i)$-determined conditions is dense below $p \in \mathbb{P}_\zeta$ for all $p,F,i$ and the set of $(p,F,i)$-determined conditions following $g$ is open for all $p,F,i,g$. 
\end{lemma}

\begin{proof}
	Enumerate $F$ as an increasing sequence $(\beta_j)_{j < \delta}$ with $\delta < \kappa$ and set $\beta_\delta := \zeta$. For an $r \leq p$ we will inductively construct a decreasing sequence $(s_j)_{j \leq \delta}$ below $r$ and a $\subseteq$-increasing sequence $(g_j)_{j \leq \delta}$ with $g_j \in \prod_{\beta \in F \cap \beta_j} \kappa^{<\kappa}$ such that $s_j \text{ is } (p, F \cap \beta_j, i) \text{-determined}$ following $g_j$. 
	
	\begin{itemize}
		\item $j=0$: Set $s_{0} := r$.
		\item $j \to j+1$: Since $s_j {\restriction} \beta_j \Vdash s_j(\beta_j) \leq_{\dot{\mathbb{Q}}_{\beta_j}} p(\beta_j)$, we may use Lemma \ref{lem:choice set dense} to find $\mathbb{P}_{\beta_j}$-names $\dot{t}, \dot{\eta}_{\beta_j}, \dot{\nu}_{\beta_j} $ with 
		\[
		s_j {\restriction} \beta_j \Vdash \dot{t} \in \mathbb{Q}_{\beta_j} \wedge \dot{t} \leq_{\dot{\mathbb{Q}}_{\beta_j}} s_j(\beta_j)
		\] 
		and 
		\begin{gather*}
			s_j {\restriction} \beta_j \Vdash \dot{t} \cap \splitt_i(p) = \{\dot{\eta}_{\beta_j}\} \wedge \succ_{\dot{t}}(\dot{\eta}_{\beta_j}) = \{\dot{\nu}_{\beta_j}\}.
		\end{gather*}
		Find a stronger condition $\tilde{s}_j \leq s_j {\restriction} \beta_j$ that decides the names $\dot{\eta}_{\beta_j}, \dot{\nu}_{\beta_j}$ as $\eta_{\beta_j}, \nu_{\beta_j}$. Define $s_{j+1} := \tilde{s}_j ^\frown \dot{t} ^\frown (s_j {\restriction} [\beta_j + 1, \zeta))$ and $g_{j+1} := g_j \cup \{(\beta_j, \nu_{\beta_j})\}$.
		\item $\lambda \leq \delta$ is a limit: By ${<}\kappa$-closure we can find a lower bound $s_\lambda$ of the sequence $(s_\ell)_{\ell < \lambda}$. Define $g_\lambda := \bigcup_{\ell < \lambda} g_\ell$. Clearly, $s_\lambda$ is $(p, F \cap \beta_\lambda, i)$-determined following $g_\lambda$.
	\end{itemize}
	
	Now $s_\delta \leq r$ is $(p,F,i)$-determined following $g_\delta$.	Lastly, if $s$ is $(p,F,i)$-determined following $g$, then clearly any $s' \leq s$ is as well.
\end{proof}

\begin{fact} \label{fact: determined p p'}
	If $p' \leq_{F,i} p$ and $s \leq p'$, then $s$ is $(p,F,i)$-determined iff it is $(p', F,i)$-determined.
\end{fact}

Suppose now that $s \leq_{PT_f} p$. The extension of $p$ to $s$ may be undertaken in two steps by interpolating on the $\leq_i$ relation. In the first step, we thin out as much as is necessary from $p$, but only in its `upper regions' - say, above the $(i+1)$-th splitting front - yielding an interpolating condition $p^{(s)}$ with $p^{(s)} \leq_i p$ (above nodes not present in $s$, $p$ may be left untouched in the extension to $p^{(s)}$). In the second step, nodes are removed from $p^{(s)}$, but only near the base of the tree, such that whenever $\eta \in p^{(s)} \backslash s$, then there is already some initial segment $\nu \lhd \eta$ with $\nu \in p^{(s)} \backslash s$ and $\htt_{p^{(s)}}(\nu) \leq i+1$. We thus have
\[
s \leq p^{(s)} \leq_i p.
\]
This motivates the next lemma.

\begin{lemma}[Interpolation] \label{lem: interpolation}
	Let $p \in \mathbb{P}_\zeta$ and $s$ be $(p,F,i)$-determined following $g \in \prod_{\beta \in F} \kappa^{<\kappa}$ for some $F \in [\zeta]^{<\kappa}, i < \kappa$. Then there exists a condition $p^{(s)} \leq_{F,i} p$ with
	\begin{itemize}
		\item $s \leq_{\mathbb{P}_\zeta} p^{(s)} \leq_{F,i} p$ and
		\item for all $(p,F,i)$-determined conditions $s'$ following $g$, whenever $s' \leq_{\mathbb{P}_\zeta} p^{(s)}$, then already $s' \leq_{\mathbb{P_\zeta}} s$.
	\end{itemize}
\end{lemma}
\begin{proof}
	Construct $p^{(s)}$ by induction such that for each $\alpha \leq \zeta$ we have $p^{(s)} {\restriction} \alpha \in \mathbb{P}_\alpha$ and $p^{(s)} {\restriction} \alpha \leq_{F \cap \alpha, i} p {\restriction} \alpha$.
	
	Assume $p^{(s)} {\restriction} \alpha$ has been defined; to define $p^{(s)}(\alpha)$, there are two cases to distinguish:
	\begin{itemize}
		\item If $\alpha \notin F$, set $p^{(s)}(\alpha) := 
		\begin{cases}
			s(\alpha) & \text{ if } s {\restriction} \alpha \in \dot{G}_\alpha  \\
			p(\alpha) & \text{ otherwise.}
		\end{cases}$
		\item If $\alpha \in F$, set $p^{(s)}(\alpha) :=
		\begin{cases}
			s(\alpha) \cup (p(\alpha) \backslash p(\alpha)^{[g(\alpha)]}) & \text{ if } s {\restriction} \alpha \in \dot{G}_\alpha \\
			p(\alpha) & \text{ otherwise.}
		\end{cases}$
		
		Note that we have $s {\restriction} \alpha \Vdash g(\alpha) \in p(\alpha)$ and
		\[
		p^{(s)} {\restriction} \alpha \Vdash p^{(s)}(\alpha) \leq_{i} p(\alpha).
		\]
	\end{itemize}
	To see that $p^{(s)} {\restriction} \gamma \in \mathbb{P}_\gamma$ for $\gamma$ limit, we note that 
    \[
        \supp(p^{(s)} {\restriction} \gamma) \subseteq \supp(s {\restriction} \gamma).
    \]
     Furthermore, we clearly have $s \leq p^{(s)}$. 
	
	It remains to check the second requirement. Take some $(p,F,i)$-determined $s'$ following $g$ with $s' \leq p^{(s)}$. Assume inductively that $s' {\restriction} \alpha \leq s {\restriction} \alpha$. Since the case $\alpha \notin F$ is trivial, we may restrict our attention to the case $\alpha \in F$. Then we have $s' {\restriction} \alpha \Vdash s'(\alpha) \leq_{\mathbb{Q}_\alpha} p^{(s)}(\alpha) = s(\alpha) \cup (p(\alpha) \backslash p(\alpha)^{[g(\alpha)]})$. But then we already have $s' {\restriction} \alpha \Vdash s'(\alpha) \leq_{\mathbb{Q}_\alpha} s(\alpha)$. In conclusion, $s' \leq s$, which finishes the proof of the lemma.
\end{proof}

\begin{remark} \label{rem: interpolant restriction}
	The above construction yields the following observation: not only is $p^{(s)}$ an interpolant for $p,s, F$ and $i$, but we even have that $p^{(s)} {\restriction} \alpha$ is an interpolant for $p {\restriction} \alpha, s {\restriction} \alpha, F \cap \alpha$ and $i$ for any $\alpha < \zeta$.
\end{remark}

In the next lemma, we show that under certain conditions, the forcing $\mathbb{P}_\zeta$ admits least upper bounds of the form
\[
\bigvee_{\mathclap{\substack{s \leq q,\\ s \text{ is } (q,F,i)-\text{determined following } g}}} s.
\]
\begin{lemma} \label{lem: weakest determined}
	Let $p \in \mathbb{P}_\zeta$ and $s$ be $(p,F,i)$-determined following $g \in \prod_{\beta \in F} \kappa^{<\kappa}$. Then for every $q \leq_{F,i} p^{(s)}$ there exists an $\tilde{s} \leq q, s$ that is $(q,F,i)$-determined following $g$ such that for every $s' \leq q$, if $s'$ is $(q,F,i)$-determined following $g$, then $s' \leq \tilde{s}$. In other words, $\tilde{s}$ is the weakest $(q,F,i)$-determined condition following $g$. 
\end{lemma}

\begin{proof}
	Construct $\tilde{s}$ by induction such that for all $\alpha \leq \zeta$ we have $\tilde{s} {\restriction} \alpha \in \mathbb{P}_\alpha$, $\tilde{s} {\restriction} \alpha \leq q {\restriction} \alpha$ and $\tilde{s} {\restriction} \alpha$ is $(q {\restriction} \alpha, F \cap \alpha, i)$-determined following $g {\restriction} \alpha$.
	
	Assume $\tilde{s} {\restriction} \alpha$ has been defined; define $\tilde{s}(\alpha)$ as
	\[
	\tilde{s}(\alpha) := 
	\begin{cases}
		q(\alpha)^{[g(\alpha)]} & \text{ if } \alpha \in F \\
		q(\alpha)& \text{ otherwise.}
	\end{cases}
	\]
	If $\alpha \notin F$, there is nothing to prove. For $\alpha \in F$, observe that since $\tilde{s} {\restriction} \alpha \leq q {\restriction} \alpha$ is $(q {\restriction} \alpha, F \cap \alpha, i)$-determined following $g {\restriction} \alpha$ and $q  \leq_{F,i} p^{(s)}$, so by the above remark we can conclude $\tilde{s} {\restriction} \alpha \leq s {\restriction} \alpha$. But
    \[
        s {\restriction} \alpha \Vdash \exists \nu: g(\alpha) \in \succ_p(\nu) \wedge \nu \in \splitt_i(p(\alpha))
    \]
    and $q {\restriction} \alpha \Vdash \splitt_i(p(\alpha)) = \splitt_i(q(\alpha))$, hence $\tilde{s}(\alpha)$ is well-defined. The other two properties follow easily.
	
	If $\gamma$ is a limit, then we have $\supp(\tilde{s} {\restriction} \gamma) \subseteq \supp(q) \cup F$, hence $\tilde{s} {\restriction} \gamma \in \mathbb{P}_\gamma$ is a condition. 
	
	Knowing $\tilde{s}$ to be well-defined, one can easily see that for each $s' \leq q$ that is $(q,F,i)$-determined following $g$ we have $s' \leq \tilde{s}$.
\end{proof}

\begin{fact} \label{fact: <kappa closed F,i}
	$(\mathbb{P}_\zeta, \leq_{F,i})$ is ${<}\kappa$-closed for all $\zeta, F,i$.
\end{fact}

Let us now introduce two auxiliary ``boundedness'' properties a $\mathbb{P}_\zeta$-condition may exhibit.
\begin{definition} \label{def: bounded}
	We say a condition $p \in \mathbb{P}_\zeta$ is $(F,i)$\textit{-bounded} for $F \in [\zeta]^{<\kappa}, i < \kappa$ iff there exists a $\mu < \kappa$ with
	\[
	\forall \beta \in F:\ p {\restriction} \beta \Vdash \splitt_i(p(\beta)) \subseteq \mu^{<\mu}.
	\]
\end{definition}

\begin{fact} \label{fact: bounded open}
	If $p \in \mathbb{P}_\zeta$ is $(F,i)$-bounded and $p' \leq_{F,i} p$, then $p'$ is as well.
\end{fact}

\begin{definition}
	Let $\zeta \leq \kappa^{++}, p \in \mathbb{P}_\zeta, F \in [\zeta]^{<\kappa}$ and $i < \kappa$. Take furthermore a $D \subseteq \mathbb{P}_\zeta$ that is open dense below $p$. We say $p$ is $(D,F,i)$-\textit{complete} iff there exists a $C \subseteq \prod_{\beta \in F} \kappa^{<\kappa}, |C| < \kappa$ and a family $(s_g)_{g \in C}$ in $D$ such that 
	\begin{enumerate}[a)]
		\item $s_g$ is $(p,F,i)$-determined following $g$ for all $g \in C$		
		\item whenever $s \leq p$ is $(p,F,i)$-determined following a function $g$ and $s \in D$, then $g \in C$ and $s \leq s_g$
	\end{enumerate}
\end{definition}

\begin{fact} \label{fact: complete maximal antichain}
	If $p \in \mathbb{P}_\zeta$ is $(D,F,i)$-complete as witnessed by $(s_g)_{g \in C}$, then $(s_g)_{g \in C}$ is a maximal antichain below $p$.
\end{fact}

\begin{lemma} \label{lem: C decreasing}
	Let $p' \leq_{F,i} p$ be $\mathbb{P}_\zeta$-conditions such that $p$ is $(D,F,i)$-complete and $p'$ is $(D',F,i)$-complete. Let $(s_g)_{g \in C}$ and $(s'_g)_{g \in C'}$ witness this. Then $C' \subseteq C$. If in addition $D' \subseteq D$, then we even have $s'_g \leq s_g$ for each $g \in C'$.
\end{lemma}

\begin{proof}
	Assume that $g \in C'$ and find a $t \leq s'_{g}$ with $t \in D$ (note that $s'_g \leq p$). Then $t \leq p$ is $(p,F,i)$-determined following $g$ by Fact \ref{fact: determined p p'} and thus $g \in C$ and $t \leq s_g$ by the second requirement in the definition of completeness. If $D' \subseteq D$, we may take $t = s'_{g}$ and get $s'_{g} \leq s_{g}$.
\end{proof}

In particular we know that the set $C$ in the definition of completeness is completely determined by $p$. Complete conditions are also going to be playing a major role later in Lemma \ref{lem: second proof technical}.

Our strategy for proving Property B* for all $\mathbb{P}_\zeta, \zeta \leq \kappa^{++}$ is by the equivalence of the following four statements:
\begin{enumerate}[label=\alph*($\zeta$):, leftmargin=46pt, ref=\alph*($\zeta$)]
	\item $\mathbb{P}_\alpha$ has Property B* for each $\alpha < \zeta$. \label{equiv: a}
	\item The set of $(F,i)$-bounded conditions is $\leq_{F,i}$-dense in $\mathbb{P}_\alpha$ for all $\alpha \leq \zeta$, $F \in [\alpha]^{<\kappa}$ and $i < \kappa$. \label{equiv: b}
	\item The set of $(D,F,i)$-complete conditions is $\leq_{F,i}$-dense in $\mathbb{P}_\zeta$ for all $F,i$ and open dense $D \subseteq \mathbb{P}_\zeta$. \label{equiv: c}
	\item $\mathbb{P}_\zeta$ has Property B*. \label{equiv: d}
\end{enumerate}

The implication \ref{equiv: a} $\Rightarrow$ \ref{equiv: b} is Lemma \ref{lem: b* -> bounded}, \ref{equiv: b} $\Rightarrow$ \ref{equiv: c} is Lemma \ref{lem: bounded -> complete} and \ref{equiv: c} $\Rightarrow$ \ref{equiv: d} is Lemma \ref{lem: complete -> property b*}. Thus \ref{equiv: a} $\Rightarrow$ \ref{equiv: d} establishes an induction by which Property B* is verified for all $\mathbb{P}_\zeta$.

\begin{corollary}
	$\mathbb{P}_\zeta$ has Property B* for all $\zeta \leq \kappa^{++}$.
\end{corollary}

\begin{lemma} \label{lem: b* -> bounded}
	Let $\zeta \leq \kappa^{++}$ and assume $\mathbb{P}_{\alpha}$ has Property B* for each $\alpha < \zeta$. Then for each $\alpha \leq \zeta$, $p \in \mathbb{P}_{\alpha}, F \in [\alpha]^{<\kappa}$ and $i < \kappa$ there is a condition $q \leq_{F,i} p$ that is $(F,i)$-bounded.
\end{lemma}

\begin{proof}
	We proceed by induction on $\alpha \leq \zeta$.
	\begin{itemize}
		\item $\alpha = 1$: Trivial by the inaccessibility of $\kappa$.
		\item $\alpha \to \alpha + 1$: Let $p \in \mathbb{P}_{\alpha+1}, F \in [\alpha + 1]^{<\kappa}$ and $i< \kappa$ be given. Since $\mathbb{P}_\alpha$ is ${<}\kappa$-closed, $\kappa$ remains inaccessible in $V^{\mathbb{P}_\alpha}$. Thus 
		\[
		\Vdash_{\mathbb{P}_\alpha} \forall \beta \in F\, \exists \mu_\beta < \kappa:\ \splitt_i(p(\beta)) \subseteq \mu_\beta^{<\mu_\beta}
		\]
		and considering $\sup_{\beta \in F} \mu_\beta$ we can find a name $\dot{\mu}$ for an ordinal less than $\kappa$ with
		\[
		\Vdash_{\mathbb{P}_\alpha} \forall \beta \in F:\ \splitt_i(p(\beta)) \subseteq \dot{\mu}^{<\dot{\mu}}.
		\]
		Let now $A \subseteq \mathbb{P}_\alpha$ be a maximal antichain deciding $\dot{\mu}$; we may find a $\mathbb{P}_\alpha \ni \hat{q} \leq_{F \cap \alpha,i} p {\restriction} \alpha$ with $|A {\restriction} \hat{q}| < \kappa$. Thus 
		\[
		\hat{q} \Vdash \dot{\mu} < \mu_q
		\]
		for some $\mu_q < \kappa$ and therefore 
		\[
		\forall \beta \in F:\ \hat{q} {\restriction} \beta \Vdash \splitt_i(p(\beta)) \subseteq \mu_q^{<\mu_q}.
		\]
		Setting $q := \hat{q} ^\frown p(\alpha)$ and noting that since $\hat{q} \leq_{F \cap \alpha, i} p {\restriction} \alpha$ we have $q {\restriction} \beta \Vdash \splitt(q(\beta)) = \splitt(p(\beta))$ for all $\beta \in F$, so it follows that $q$ is $(F,i)$-bounded. 
		\item $\gamma \leq \zeta$ is a limit: Let $p \in \mathbb{P}_{\gamma}, F \in [\gamma]^{<\kappa}$ and $i< \kappa$ be given. Using ${<}\kappa$-closure of $(\mathbb{P}_\gamma, \leq_{F,i})$ (see Fact \ref{fact: <kappa closed F,i}) and the inductive assumption, we can construct a $\leq_{F,i}$-decreasing sequence $(q_\beta)_{\beta \in F}$ in $\mathbb{P}_\gamma$ with the following properties:
		\begin{itemize}
			\item $\forall \beta \in F\, \forall \beta' \in F \cap \beta:\ q_\beta \leq_{F,i} q_{\beta'} \leq_{F,i} p$
			\item $\forall \beta \in F\, \exists \mu_\beta < \kappa\, \forall \beta' \in F \cap (\beta +1):\ q_\beta {\restriction} \beta' \Vdash_{\mathbb{P}_{\beta'}} \splitt_i(q_\beta(\beta')) \subseteq \mu_\beta^{<\mu_\beta}.$
		\end{itemize}
		Again using ${<}\kappa$-closure of $(\mathbb{P}_\gamma, \leq_{F,i})$, set $q$ to a $\leq_{F,i}$-lower bound of $(q_\beta)_{\beta \in F}$ and $\mu := \sup_{\beta \in F} \mu_\beta$. Now $q \leq_{F,i} p$ and
		\[
		\forall \beta \in F:\ q {\restriction} \beta \Vdash \splitt_i(q(\beta)) \subseteq \mu^{<\mu}. \qedhere
		\]
	\end{itemize}
\end{proof}

\begin{lemma} \label{lem: bounded -> complete}
	Let $\zeta \leq \kappa^{++}, F \in [\zeta]^{<\kappa}, i < \kappa$ and suppose $p \in \mathbb{P}_{\zeta}$ is $(F,i)$-bounded. Let furthermore $D \subseteq \mathbb{P}_{\zeta}$ be open dense below $p$. Then there is a $q \leq_{F,i} p$ which is $(D,F,i)$-complete. 
	
	In particular, if the set of $(F,i)$-bounded conditions is $\leq_{F,i}$-dense in $\mathbb{P}_\zeta$, then for all open dense $D \subseteq \mathbb{P}_\zeta$ the set of $(D,F,i)$-complete conditions is $\leq_{F,i}$-dense as well.
\end{lemma}

\begin{proof}
	By assumption $p$ is $(F,i)$-bounded, hence we can find a $\mu$ such that
	\[
	\forall \beta \in F:\ p {\restriction} \beta \Vdash \splitt_i(p(\beta)) \subseteq \mu^{<\mu}.
	\]
	Our strategy is to consider all possible choices a $(p,F,i)$-determined condition might make at the $i$-th splitting front of $p$ and then interpolate on the witnesses of such choices. 
	Since we have a uniform bound $\mu$ on the respective splitting fronts, this will require us to only iterate through ${<}\kappa$ many possibilities. Set $\tilde{\mu}_\beta := \sup_{j \leq \mu} f_\beta(j)$ and consider the set
	\[
	\tilde{C} := \prod_{\beta \in F} \tilde{\mu}_\beta^{\leq \mu}.
	\]
	Whenever $s$ is $(p,F,i)$-determined following some $g$, then $g \in \tilde{C}$. Enumerate $\tilde{C}$ as $(g_{j+1})_{j < \delta}$ with $\delta < \kappa$. We now construct a $\leq_{F,i}$-decreasing sequence $(t_j)_{j < \delta}$:
	\begin{itemize}
		\item $j = 0$: Set $t_0 := p$.
		\item $j \to j+1$: If there exists an $s \leq t_j$ that is $(p,F,i)$-determined following $g_{j+1}$, pick an arbitrary such condition from $D$ (this is possible, since $D$ is dense below $p$) and call it $\tilde{s}_{g_{j+1}}$. Set $t_{j+1} := t_j^{(\tilde{s}_{g_{j+1}})}$. If there is no such $s$, simply set $t_{j+1} := t_j$. In any case we have $t_{j+1} \leq_{F,i} t_j$.
		\item $\lambda$ is a limit: Set $t_\lambda$ to a $\leq_{F,i}$-lower bound of $(t_j)_{j < \lambda}$ (see Fact \ref{fact: <kappa closed F,i}).
	\end{itemize}
	
	Set $q$ to a $\leq_{F,i}$-lower bound of $(t_j)_{j < \delta}$. We know $q \leq_{F,i} p$. Now let
	\[
	C := \left\{g \in \tilde{C}:\ \tilde{s}_g \text{ exists} \right\},
	\]
	i.e. $C$ is the set of all $g_{j+1}$ for which a witness was found in the inductive step $j \to j+1$. We have $|C| < \kappa$. Finally, for each $g = g_{j+1} \in C$ apply Lemma \ref{lem: weakest determined} to $p = t_j$, $s = \tilde{s}_{g_{j+1}}$ and $q = q$ to construct the condition $s_g$. We have $s_g \in D$ since $s_g \leq \tilde{s}_g \in D$ and $D$ is open. 
	
	We verify that $q$ is $(D,F,i)$-complete, witnessed by $(s_g)_{g \in C}$. The first condition in the definition of completeness follows by construction. The second follows immediately from Lemma \ref{lem: weakest determined} by noting that if $s \leq q$ is $(q,F,i)$-determined following $g$, then $g = g_{j+1}$ for some $j < \delta$, and thus a witness was found in the inductive step $j \to j+1$; hence $g \in C$.
\end{proof}

\begin{lemma} \label{lem: complete -> property b*}
	If the set of $(D,F,i)$-complete conditions is $\leq_{F,i}$-dense in $\mathbb{P}_\zeta$ for all $F,i$ and $D \subseteq \mathbb{P}_\zeta$ open dense, then $\mathbb{P}_\zeta$ has Property B*.
\end{lemma}
\begin{proof}
	We have seen in Lemma \ref{lem: iterated fusion limits exist} that the fusion condition is always fulfilled. We will now prove that $\mathbb{P}_{\zeta}$ fulfils the antichain condition: let $A \subseteq \mathbb{P}_\zeta$ be a maximal antichain, $p \in \mathbb{P}_\zeta, F \in [\zeta]^{<\kappa}$ and $i < \kappa$. Find a $q \leq_{F,i} p$ that is $(D,F,i)$-complete, where
	\[
	D = \{r \in \mathbb{P}_\zeta:\ |A {\restriction} r| = 1\}
	\]
	and let $(s_g)_{g \in C}$ witness this. Since $(s_g)_{g \in C}$ is a maximal antichain below $q$ by Fact \ref{fact: complete maximal antichain}, it is easy to see that 
	\[
	A {\restriction} q \subseteq \{r \in A:\ \exists g \in C:\ A {\restriction} s_g = \{r\}\}
	\]
	and thus $|A {\restriction} q| \leq |C| < \kappa$.
\end{proof}

From this point onward, assume that 
\[
V \models |2^\kappa| = \kappa^+.
\]
From among our stated goals at the beginning of this section, only one remains to be verified; our interest now turns to the $\kappa^{++}$-chain condition:

\begin{theorem} \label{th: cc}
	$\mathbb{P}$ has the $\kappa^{++}$-c.c.
\end{theorem}

This will follow easily from Lemma \ref{lem: jech cc} once we have proven that each $\mathbb{P}_\alpha$ for $\alpha < \kappa^{++}$ has a dense subset of size $\kappa^{+}$.

For the purposes of the next definition, for each $\alpha < \kappa^{++}$ fix a $\mathbb{P}_\alpha$-name $\dot{c}_\alpha$ for a bijection $c_\alpha : (PT_{f_\alpha})^{V^{\mathbb{P_\alpha}}} \to (\mathcal{P}(\kappa))^{V^{\mathbb{P_\alpha}}}$ such that $c_\alpha(\mathbbm{1}_{PT_{f_\alpha}}) = \emptyset$. Let us also fix a $\theta$ sufficiently large (e.g. $\theta > |2^\mathbb{P}|$) and a well-ordering $\preceq$ of $H(\theta)$ (by which we mean the sets hereditarily of size ${<} \theta$, not $H_\theta$ as defined below).

\begin{definition}
	Let $\alpha < \kappa^{++}$. 
	\begin{itemize}		
		\item A $\mathbb{P}_\alpha$-name $\dot{\tau}$ for a subset of $\kappa$ is $\alpha$-good iff $\dot{\tau}$ is a nice name of the form \[\dot{\tau} = \{\{j\} \times A_j: j < \kappa\},\] where $A_j \subseteq H_\alpha$ and $|A_j| \leq \kappa$ for all $j < \kappa$.
		\item A condition $p \in \mathbb{P}_\alpha$ is in $\tilde{H}_\alpha$ iff $p {\restriction} \beta \in H_\beta$ for each $\beta< \alpha$ and, if $\alpha = \beta + 1$ is a successor, there additionally is a $\beta$-good name $\dot{\tau}$ such that $p {\restriction} \beta \Vdash_{\beta} \dot{c}_{\beta}(p(\beta)) = \dot{\tau}$ and $p(\beta)$ is the $\preceq$-least $\mathbb{P}_\beta$-name that satisfies this relation for $p {\restriction} \beta$ and $\dot{\tau}$. Now let $H_\alpha$ be the closure of $\tilde{H}_\alpha$ under canonical fusion limits (see Remark \ref{rem: weakest fusion limit}), where we always choose the pointwise $\preceq$-least name for the canonical fusion limit (i.e. for every $\beta < \alpha$, $q(\beta)$ is the $\preceq$-least name for the $\beta$-th entry of the canonical fusion limit $q$ of a fusion sequence). \footnote{For $\alpha = 0$ let $H_0 = P_0$ be the trivial forcing notion.}
	\end{itemize}
\end{definition}

\begin{remark} \label{rem: restrictions Ha}
    It is easy to see that for $p \in H_\alpha$ (not only $p \in \tilde{H}_\alpha$) we have $p {\restriction} \beta \in H_\beta$ for all $\beta < \alpha$ as well. 
    
    Note also that there is a canonical embedding $H_\beta \hookrightarrow H_{\alpha}$ for $\beta < \alpha$.
\end{remark}

\begin{remark}
	$H_\alpha$-conditions and $\alpha$-good names appeared first as $H_\kappa$-$\mathbb{P}_\alpha$-names in \cite{baumhauer} and are themselves a straightforward generalization of hereditarily countable names as introduced in \cite{shelah_proper}.
\end{remark}

\begin{lemma} \label{lem: ha dense}
	For every $0 < \alpha < \kappa^{++}$, $F \in [\alpha]^{<\kappa}$ and $i < \kappa$, $H_\alpha$ is $\leq_{F,i}$-dense in $\mathbb{P}_\alpha$ and $|H_\alpha| = \kappa^+$.
\end{lemma}
\begin{proof}
	We prove the statements by induction on $\alpha$.
	\begin{itemize}
		\item $\alpha=1$: We have $H_1 = \mathbb{P}_1$ and $|\mathbb{P}_1| = |PT_{f_0}| = \kappa^{+}$.
		\item $\alpha \to \alpha + 1$: Let $p \in \mathbb{P}_{\alpha+1}$, $F \in [\alpha+1]^{<\kappa}$ and $i < \kappa$. Using the inductive hypothesis, we may assume $p {\restriction} \alpha \Vdash_\alpha \dot{c}_\alpha(p(\alpha)) = \{\{j\} \times A_j: j < \kappa\}$ with $A_j \subseteq H_\alpha$ for all $j < \kappa$. Additionally using Property B*, construct a fusion sequence $\langle q_j, F_j: j < \kappa \rangle$ with
		\begin{itemize}
			\item $\forall j < \kappa: q_j \in H_\alpha$ and $|A_j {\restriction} q_j| < \kappa$,
			\item $q_0 \leq_{F \cap \alpha, i} p {\restriction} \alpha$,
			\item $\forall j < \ell < \kappa: q_\ell \leq_{F_j, i + j} q_j$ and $F \cap \alpha \subseteq F_j$,
		\end{itemize}
		where the $F_j$ are constructed using a bookkeeping argument. Let $q_\kappa$ be a canonical fusion limit of this sequence that is a member of $H_\alpha$. By the first property of the fusion sequence, \[\dot{\tau} = \{\{j\} \times (A_j {\restriction} q_\kappa): j < \kappa\}\] is an $\alpha$-good name and $q_\kappa \Vdash_\alpha \dot{c}_\alpha(p(\alpha)) = \dot{\tau}$. Let thus $\dot{r}$ be the $\preceq$-least $\mathbb{P}_\alpha$-name that satisfies $q_\kappa \Vdash_\alpha \dot{c}_\alpha(\dot{r}) = \dot{\tau}$; now we can (pedantically, using Remark \ref{rem: restrictions Ha}) conclude $H_{\alpha + 1} \ni (q_\kappa\, ^\frown \dot{r}) \leq_{F,i} p$.
		
		Since $|H_\alpha| = \kappa^+$ and there are only $|(\kappa^+)^\kappa|=\kappa^+$ many $\alpha$-good names for reals, we get $|\tilde{H}_{\alpha+1}| = \kappa^+$ and therefore also $|H_{\alpha+1}| = \kappa^+$ by standard arguments.
		
		\item $\gamma$ is a limit: If $\cf(\gamma) = \kappa^+$, density is trivial and $|H_\gamma| \leq |\bigcup_{\beta < \gamma} H_\beta| \leq \kappa^+$. 
		
		Assume $\cf(\gamma) = \delta \leq \kappa$ and let furthermore $p \in \mathbb{P}_{\gamma}$ , $F \in [\gamma]^{<\kappa}$ and $i < \kappa$ be given. For a cofinal sequence $(\beta_j)_{j < \delta}$ construct a fusion sequence $\langle q_j, F_j: j < \delta \rangle$ with
		\begin{itemize}
            \item $\forall j < \delta: F \cap \beta_j \subseteq F_j \subseteq \beta_j$,
			\item $\forall j < \delta: q_j {\restriction} \beta_j \leq_{F_j, i + j} p {\restriction} \beta_j$,
			\item $\forall j < \ell < \delta: q_\ell \leq_{F_j, i+j} q_j$,
   			\item $\forall j < \delta : q_j \in H_\gamma$, which may be achieved by having $q_j {\restriction} \beta_j \in H_{\beta_j}$ and letting $q_j(\beta)$ be the trivial condition for $\beta \geq \beta_j$.
		\end{itemize}
		The $F_j$ are again constructed using a bookkeeping argument. Set $q_\delta$ to be a fusion limit contained in $H_\gamma$; then we have $H_\gamma \ni q_\delta \leq_{F,i} p$. Lastly, using Remark \ref{rem: restrictions Ha}, we get $|H_\gamma| \leq \left| \prod_{j < \delta} H_{\beta_j} \right| \leq \kappa^+$. \qedhere
	\end{itemize}  
\end{proof}

\begin{lemma} \label{lem: jech cc}
	Let $\langle \mathcal{P}_\alpha, \dot{\mathcal{Q}}_\beta : \alpha \leq \zeta, \beta < \zeta \rangle$ be an iteration such that 
	\[
	\forall \alpha < \zeta:\ \mathcal{P}_\alpha \text{ has the } \theta\text{-c.c.},
	\] 
	where $\theta$ is a regular uncountable cardinal. If $\mathcal{P}_\zeta$ is a direct limit and, additionally, either $\cf(\zeta) \neq \theta$ or the set $\{\gamma < \zeta:\ \mathcal{P}_\gamma \text{ is a direct limit}\}$ is stationary, then $\mathcal{P}_\zeta$ has the $\theta$-c.c.
\end{lemma}
\begin{proof}
	See \cite[Theorem~16.30]{jech}.
\end{proof}

\begin{proof}[Proof of Theorem \ref{th: cc}]
	By Lemma \ref{lem: ha dense}, each $\mathbb{P}_\alpha$ has a dense subset of size ${\leq}\kappa^{+}$ and therefore satisfies the $\kappa^{++}$-c.c.; our desired conclusion thus follows easily from Lemma \ref{lem: jech cc} and by noting that the set $\{\gamma < \kappa^{++}: \cf(\gamma) = \kappa^{+}\}$ is stationary in $\kappa^{++}$.  
\end{proof}

As we have remarked at the beginning of this section, we get the following corollary:

\begin{corollary}
	Forcing with $\mathbb{P}_\alpha, \alpha \leq \kappa^{++}$ does not collapse cardinals.
\end{corollary}

\begin{lemma} \label{lem: no collapse}
	We have
	\begin{itemize}
		\item If $\alpha < \kappa^{++}$, then $V^{\mathbb{P}_\alpha} \models  |2^\kappa| = \kappa^+$.
		\item If $\cof(\alpha) > \kappa$, then $V^{\mathbb{P}_\alpha} \models 2^\kappa = \bigcup_{\beta < \alpha} (2^\kappa \cap V^{\mathbb{P}_\beta})$.
		\item $V^{\mathbb{P}} \models  |2^\kappa| = \kappa^{++}$.
	\end{itemize}
\end{lemma}

\begin{proof}		
	Suppose $\alpha < \kappa^{++}$. Let $\dot{\tau}$ be a $\mathbb{P}_\alpha$-name and $p \in \mathbb{P}_\alpha$ force $\dot{\tau}$ to be a subset of $\kappa$. Without loss of generality assume $\dot{\tau} = \{\{j\} \times A_j: j < \kappa\}$ is a nice name with $A_j \subseteq H_\alpha$ for all $j < \kappa$. Just like in Lemma \ref{lem: ha dense}, construct a fusion sequence $\langle q_j, F_j: j < \kappa \rangle$ below $p$ with $|A_j {\restriction} q_j| < \kappa$ for all $j < \kappa$. The fusion limit $q_\kappa$ forces $\dot{\tau}$ to be equal to an $\alpha$-good name, of which there are only $\kappa^+$ many. If we additionally assume $\cf(\alpha) > \kappa$, then $q_\kappa$ forces $\dot{\tau}$ to be equal to a $\mathbb{P}_\gamma$-name for some $\gamma < \alpha$. The first two statements thus follow by a density argument.
	
	The last point follows immediately from the previous two.
\end{proof}

For $\alpha < \kappa^{++}$ we can define in $V^{\mathbb{P}_\alpha}$ the ${\leq}\kappa$-support tail iteration
$\langle \tilde{\mathbb{P}}_\gamma, \dot{\tilde{\mathbb{Q}}}_\beta : \gamma \leq \kappa^{++}, \beta < \kappa^{++} \rangle$ such that $\Vdash_{\tilde{\mathbb{P}}_\gamma} \dot{\tilde{\mathbb{Q}}}_\gamma = \dot{\mathbb{Q}}_{\alpha + \gamma}$. Set $\mathbb{P}_{\alpha, \kappa^{++}} := \tilde{\mathbb{P}}_{\kappa^{++}}$ and note that $\mathbb{P}_{\alpha, \kappa^{++}}$ has Property B* as well. It follows from standard proper forcing arguments (adapting \cite[Theorem III.3.4]{shelah_proper} to the case of $\kappa$-properness) that $\mathbb{P} \simeq \mathbb{P}_\alpha \star \mathbb{P}/_{\dot{G}_\alpha}$ is densely embedded in $\mathbb{P}_\alpha \star \mathbb{P}_{\alpha, \kappa^{++}}$.

\section{First Proof}

We are now equipped to present the first proof of the relative consistency of
\[
\text{ZFC} + |2^\kappa| = \kappa^{++} + \mathcal{SN} = [2^\kappa]^{\leq \kappa^+}.
\]
Starting with a model of $|2^\kappa| = \kappa^+$, we consider a ${\leq}\kappa$-supported forcing iteration $\langle \mathbb{P}_\alpha, \dot{\mathbb{Q}}_\beta : \alpha \leq \kappa^{++}, \beta < \kappa^{++} \rangle$ with 
\[
\forall \alpha < \kappa^{++}:\ \Vdash_{\mathbb{P}_\alpha} \dot{\mathbb{Q}}_\alpha = PT_{f_\alpha},
\]
such that $(f_\alpha)_{\alpha < \kappa^{++}} \in V$ and each increasing $f \in \kappa^\kappa \cap V$ appears as an $f_\alpha$ cofinally often. Set $\mathbb{P} := \mathbb{P}_{\kappa^{++}}$. By Lemma \ref{lem: no collapse} we see $V^\mathbb{P} \models |2^\kappa| = \kappa^{++}$.

By a density argument, the $\alpha$-th generic real $\dot{s}_\alpha$ will encode a covering of the ground model reals satisfying the `challenge' $f_\alpha$. For this argument it is sufficient that only $f_\alpha$ from some dominating family appear cofinally often; from the perspective of some intermediate model $V^{\mathbb{P}_\alpha}$, the tail forcing $\mathbb{P}_{\alpha, \kappa^{++}}$ fulfils this criterion. Hence the observation can be extended to the set of reals appearing already in some $V^{\mathbb{P}_\alpha}$; the following theorem formalizes this.

\begin{theorem}
	$V^\mathbb{P} \models \forall \alpha < \kappa^{++}: 2^\kappa \cap V^{\mathbb{P}_\alpha} \in \mathcal{SN}.$ \label{th: easy inclusion aux}
\end{theorem}
\begin{proof}
	Working within $V^\mathbb{P}$, take $\alpha < \kappa^{++}$ and $f \in \kappa^\kappa$. Since $\mathbb{P}$ is $\kappa^\kappa$-bounding by Fact \ref{fact: b* implies proper and bounding}, we find an $h \in \kappa^\kappa \cap V$ with $f \leq h$ and $\beta > \alpha$ with $f_\beta(i) = |2^{h(i)}|$ for all $i < \kappa$. In $V$ we may construct bijections $c_\ell : |2^\ell| \to 2^\ell$ for $\ell < \kappa$.
	
	Working now in $V^{\mathbb{P}_\beta}$, define the function $\dot{\sigma}(i) = c_{h(i)}(\dot{s}_\beta(i))$. For $x \in 2^\kappa \cap V^{\mathbb{P}_\alpha}$ the set
	\[
	D_x := \{p \in \mathbb{Q}_\beta:\ \exists i < \kappa: p \Vdash \dot{\sigma}(i) = x {\restriction} h(i)\}
	\]
	is dense; in fact, it is easy to see that for any $p \in \mathbb{Q}_\beta$ and $\eta \in \splitt(p), j = \dom(\eta)$ we have $p^{[\eta ^\frown c^{-1}_{h(j)}(x {\restriction} h(j)) ]} \in D_x$. Here $c^{-1}_{h(j)}(x {\restriction} h(j))$ is well-defined, since $2^{<\kappa} \cap V = 2^{<\kappa} \cap V^{\mathbb{P}_\beta}$. Hence $(\sigma(i))_{i < \kappa}$ provides the required covering for the challenge $f$ and $2^\kappa \cap V^{\mathbb{P}_\alpha} \in \mathcal{SN}$ follows.
\end{proof}

If $V^{\mathbb{P}} \models X \subseteq 2^\kappa, |X| \leq \kappa^+$, then by the $\kappa^{++}$-c.c., $X$ already appears at some intermediate stage $V^{\mathbb{P}_\alpha}$. We thus get one direction of our desired result by the previous theorem.
\begin{theorem} \label{th: easy inclusion}
	$V^{\mathbb{P}} \models [2^\kappa]^{\leq \kappa^+} \subseteq \mathcal{SN}$.
\end{theorem} 

In order to lift the arguments appearing in \cite{smz_no_cohen}, we require additional large cardinal assumptions on $\kappa$. A priori it is sufficient for our purposes for $\kappa$ to merely be weakly compact, since the only occasion at which a property stronger than inaccessibility is utilized is a crucial invocation of the tree property in Lemma \ref{lem: initial segment}. However, the aforementioned lemma is invoked not only in $V$, but also at intermediate stages $V^{\mathbb{P}_\alpha}$; it might be the case that weak compactness of $\kappa$ is by that point destroyed.

The following large cardinal property was introduced by Villaveces \cite[Definition 4]{villaveces}:

\begin{definition}
	Let $\theta$ be an ordinal. We say an inaccessible cardinal $\kappa$ is $\theta$-\textit{strongly unfoldable} iff for all transitive models $M$ of ZF$^-$ (ZF without the Power Set Axiom) such that $|M| = \kappa, \kappa \in M$ and ${}^{<\kappa}M \subseteq M$ there exists a transitive model $N$ with $V_{\theta} \cup \{\theta\} \subseteq N$ and an elementary embedding $j : M \to N$ with critical point $\kappa$ and $j(\kappa) \geq \theta$.
	
	Furthermore, call $\kappa$ strongly unfoldable iff it is $\theta$-strongly unfoldable for all $\theta$. 
\end{definition}

Strongly unfoldable cardinals are weakly compact and are downwards absolute to $L$ \cite{villaveces}. Villaveces also observed that Ramsey cardinals are strongly unfoldable in $L$ (though they may fail to be such in $V$). The consistency strength of a strongly unfoldable cardinal thus slots between a weakly compact and Ramsey cardinal, with it being a conservative enough strengthening of weak compactness as to still be consistent with $V=L$.

Of interest to us is a preservation theorem by Johnstone \cite{johnstone_unfoldable}.

\begin{theorem}[Johnstone \cite{johnstone_unfoldable}]
	For any $\kappa$ strongly unfoldable there is a forcing extension in which the strong unfoldability of $\kappa$ is indestructible under ${<}\kappa$-closed, $\kappa$-proper forcing notions.
\end{theorem}

We stress that the full strength of strong unfoldability is not used in our proof; we merely require it in order to make the weak compactness of $\kappa$ indestructible by the forcings $\mathbb{P}_\alpha$. 

For a strongly unfoldable $\kappa$, after forcing indestructibility using Johnstone's theorem, we may collapse a potentially blown up $2^\kappa$ back to $\kappa^+$ with a ${<}\kappa^+$-closed forcing \footnote{${<}\kappa^+$-closed forcings and two-step iterations of $\kappa$-proper forcings are $\kappa$-proper.}. Throughout this section we may therefore assume
\begin{align*}
	V \models `` |2^\kappa| = \kappa^+ + &\text{ the strong unfoldability of $\kappa$ is indestructible} \\
	&\text{under ${<}\kappa$-closed, $\kappa$-proper forcing notions''.}
\end{align*}

We now set out to prove $V^\mathbb{P} \models \mathcal{SN} \subseteq [2^\kappa]^{\leq \kappa^+}$.

The statement of the next two lemmas takes place in $V^{\mathbb{P}_\alpha}$. Recall that $\mathbb{P}_{\alpha, \kappa^{++}}$ denotes the tail forcing.
\begin{lemma} \label{lem: initial segment}
	Let $\alpha < \kappa^{++}$ be an ordinal, $\dot \tau$ a $\mathbb{P}_{\alpha,\kappa^{++}}$-name for a real in $2^\kappa$, $F \in [\kappa^{++} \backslash \alpha]^{< \kappa}$ and $i < \kappa$. Assume furthermore that $p \in \mathbb{P}_{\alpha, \kappa^{++}}$ forces $\dot \tau \notin V^{\mathbb{P}_\alpha}$. Then there exists a $\delta <\kappa$ such that 
	\[
	\forall  \eta \in 2^\delta \, \exists q \leq _{F,i} p:\ q \Vdash_{\mathbb{P}_{\alpha, \kappa^{++}}} \eta \ntriangleleft \dot \tau.
	\]
	We will write $\delta_{p, F,i}$ for the least such $\delta$.
\end{lemma}

\begin{proof}
	Suppose not. Then we can find $\alpha, \dot{\tau}, F,i$ and $p$ with
	\[
	\forall \delta < \kappa\, \exists \eta_\delta \in 2^\delta:\ \neg(\exists q \leq_{F,i} p:\ q \Vdash \eta \ntriangleleft \dot{\tau}).
	\]
	Set $T:= \{\eta_\delta {\restriction} \ell: \ell \leq \delta < \kappa\}$. By virtue of the preparation of $\kappa$, 
	\[
	V^{\mathbb{P}_\alpha} \models \kappa \text{ is weakly compact}
	\]
	and therefore, since $T$ is a ${<}\kappa$-splitting tree of height $\kappa$, it has a branch $b^*$ in $V^{\mathbb{P}_\alpha}$. Since $p$ forces $\dot{\tau} \notin V^{\mathbb{P}_\alpha}$, there is a $\mathbb{P}_{\alpha, \kappa^{++}}$-name $\dot{\ell}$ for an ordinal less than $\kappa$ such that $p \Vdash \dot{\tau} {\restriction} \dot{\ell} \neq b^* {\restriction} \dot{\ell}$. As $\mathbb{P}_{\alpha, \kappa^{++}}$ satisfies Property B*, there is a $q \leq_{F,i} p$ and $\ell^* < \kappa$ with $q \Vdash \dot{\ell} < \ell^*$.
	
	Since $b^* {\restriction} \ell^* \in T$, there is a $\delta \geq \ell^*$ such that $b^* {\restriction} \ell^* = \eta_\delta {\restriction} \ell^*$. But this means $q \Vdash \dot{\tau} {\restriction} \ell^* \neq \check{\eta}_\delta {\restriction} \ell^*$ and therefore $q \Vdash \eta_\delta \ntriangleleft \dot{\tau}$, a contradiction.
\end{proof}

In the following we refer to  pointwise (everywhere) domination $\leq$ and not just the eventually dominating relation. For a ${<}\kappa$-closed, $\kappa^\kappa$-bounding forcing, the ground model $\kappa$-reals form a pointwise dominating family.

\begin{definition}
	Let $D\subseteq \kappa^\kappa$ be a dominating family. We say that $H$ has index $D$ iff $H=\{h_f \colon f\in D\}$ and $\forall i <\kappa \colon h_f (i) \in 2^{f(i)}$.
\end{definition}

\begin{fact} \label{fact: sn iff dominating}
	\begin{align*}
		X \in \mathcal{SN} &\Leftrightarrow \forall D \,\text{dominating} \,\,\, \exists H  \, \text{with index} \, D \colon X \subseteq \bigcap_{f \in D} \bigcup_{\alpha< \kappa} [h_f(\alpha)].
	\end{align*}
\end{fact}

\begin{lemma} \label{lem: condition tree}
	Let $D \in V$ be a dominating family, $\alpha < \kappa^{++}$ and $H \in V^{\mathbb{P}_\alpha}$ have index $D$. Let furthermore $\dot{\tau}$ be a name for an element of $2^\kappa$ with $\Vdash_{\mathbb{P}_{\alpha, \kappa^{++}}} \dot{\tau} \notin V^{\mathbb{P}_\alpha}$. Then we have
	\[
	\Vdash_{\mathbb{P}_{\alpha, \kappa^{++}}} \dot{\tau} \notin \bigcap_{f\in D} \bigcup_{i <\kappa} [h_f(i)].
	\]
\end{lemma}

\begin{proof}
	We prove the claim with a density argument, let therefore $p \in \mathbb{P}_{\alpha, \kappa^{++}}$ be arbitrary. Working in $V^{\mathbb{P}_\alpha}$, we will construct an increasing sequence $(\delta_i)_{i < \kappa}$ of ordinals less than $\kappa$. On the tree 
	\[
	T := \{g \in \prod_{j < i} 2^{\delta_j}: i < \kappa\}
	\]
	we shall construct a mapping $\mathfrak{q}: T \to \mathbb{P}_{\alpha, \kappa^{++}}$ and a sequence of increasing sets $(F_i)_{i < \kappa}$ such that whenever $b \in \prod_{j < \kappa} 2^{\delta_j}$ is a branch of $T$ in $V^{\mathbb{P}_\alpha}$, then
	\[
	\langle \mathfrak{q}(b {\restriction} i), F_i:\ i < \kappa \rangle
	\]
	is a fusion sequence below $p$. Each condition $\mathfrak{q}(g)$ will carry some information about an increasingly long initial segment of $\dot{\tau}$. More specifically, we will ensure that for all $i < \kappa$ and $g \in \prod_{j < i} 2^{\delta_j}$ we have
	\[
	\mathfrak{q}(g) \Vdash \forall j < i:\ g(j) \ntriangleleft \dot{\tau}.
	\]
	
	We define $\mathfrak{q}(g)$ for $g \in \prod_{j < i} 2^{\delta_j}$ by induction in $i$.
	
	\begin{itemize}
		\item $i=0$: Set $\mathfrak{q}(\emptyset) := p$ and $F_0 := \emptyset$.
		\item $i \to i+1$: Assume $\mathfrak{q}(g)$ is defined for every $g \in \prod_{j < i} 2^{\delta_j}$. Using Lemma \ref{lem: initial segment} we can define $\delta_{i} := \sup\Big(\{\delta_{\mathfrak{q}(g), F_i, i}: g \in \prod_{j < i} 2^{\delta_j}\} \cup \{\delta_j + 1: j < i\} \Big)$ and for every $g \in \prod_{j < i} 2^{\delta_j}, \eta_{i} \in 2^{\delta_{i}}$ find a condition $\mathfrak{q}(g ^\frown \eta_{i}) \leq_{F_i,i} \mathfrak{q}(g)$ with
		\[
		\mathfrak{q}(g ^\frown \eta_{i}) \Vdash \eta_{i} \ntriangleleft \dot{\tau}.
		\]
		Now since $\mathfrak{q}(g ^\frown \eta_{i}) \leq \mathfrak{q}(g)$, we also have
		\[
		\mathfrak{q}(g ^\frown \eta_{i}) \Vdash \forall j < i:\ g(j) \ntriangleleft \dot{\tau}.
		\]
		Use a bookkeeping argument to define $F_{i+1}$.
		\item  $\lambda < \kappa$ is a limit: By construction, for every $g \in \prod_{j < \lambda} 2^{\delta_j}$ the sequence $(\mathfrak{q}(g {\restriction} j))_{j < \lambda}$ is a fusion sequence. Set $\mathfrak{q}(g)$ to be a fusion limit of said sequence and $F_\lambda := \bigcup_{j < \lambda} F_j$. Note that we have
		\[
		\mathfrak{q}(g) \Vdash \forall j < \lambda:\ g(j) \ntriangleleft \dot{\tau}.
		\]
	\end{itemize}
	
	This concludes the construction of $\mathfrak{q}$. Let now $f \in D$ dominate the function $i \mapsto \delta_i$ and set $\eta_i := h_f(i) {\restriction} \delta_i$. Now $(\mathfrak{q}(g_i))_{i < \kappa}$ with $g_i = (\eta_j)_{j < i}$ is a fusion sequence and has a fusion limit $q_\kappa$. It follows that
	\[
	q_\kappa \Vdash \eta_i \ntriangleleft \dot{\tau}
	\]
	for each $i < \kappa$ and therefore $q_\kappa \Vdash \dot{\tau} \notin \bigcap_{f\in D} \bigcup_{i <\kappa} [h_f(i)]$. Thus the set of conditions that force $\dot{\tau} \notin \bigcap_{f\in D} \bigcup_{i <\kappa} [h_f(i)]$ is dense in $\mathbb{P}_{\alpha, \kappa^{++}}$.
\end{proof}

We see that every intermediate model $V^{\mathbb{P}_\alpha}$ believes that a set $X \subseteq 2^\kappa$ which contains a real appearing in a later model will never be strong measure zero with respect to any test conducted in $V^{\mathbb{P}_\alpha}$. This essentially gives us our theorem.

\begin{theorem}
	$V^\mathbb{P} \models \mathcal{SN} = [2^\kappa]^{\leq \kappa^+}$.
\end{theorem}

\begin{proof}
	Since we already saw one inclusion in Theorem \ref{th: easy inclusion}, it suffices to show $V^\mathbb{P} \models \mathcal{SN} \subseteq [2^\kappa]^{\leq \kappa^+}$. Let now $X \in V^\mathbb{P}$ be of size $\kappa^{++}$ and $D$ be a dominating family in $V^\mathbb{P}$ which lies in $V$. We will show that there is no $H \in V^\mathbb{P}$ with index $D$ such that 
	\[
	X \subseteq \bigcap_{f \in D} \bigcup_{i < \kappa} [h_f(i)],
	\]
	hence $X$ is not strong measure zero by Fact \ref{fact: sn iff dominating}. Towards a contradiction, assume such an $H$ exists. Note that since $D$ is in $V$, the set $H$ can have cardinality at most $\kappa^+$; since $\mathbb{P}$ fulfils the $\kappa^{++}$-c.c., we know $H$ must already appear in some intermediate model $V^{\mathbb{P}_\alpha}$. But $|X| = \kappa^{++}$, thus there must be an $x \in X$ with $x \notin V^{\mathbb{P}_\alpha}$. 
	
	Working in $V^{\mathbb{P_\alpha}}$, let $\dot{x}$ and $\dot{X}$ be $\mathbb{P}_{\alpha, \kappa^{++}}$-names for $x$ and $X$, respectively, so that we have
	\[
	\Vdash_{\mathbb{P}_{\alpha, \kappa^{++}}} \dot{x} \in \dot{X} \wedge \dot{x} \notin V^{\mathbb{P}_\alpha}.
	\] 
	Then by Lemma \ref{lem: condition tree} we have
	\[
	\Vdash_{\mathbb{P}_{\alpha, \kappa^{++}}} \dot{x} \notin \bigcap_{f\in D} \bigcup_{i <\kappa} [h_f(i)],
	\]
	and thus $X \nsubseteq \bigcap_{f \in D} \bigcup_{i < \kappa} [h_f(i)]$ in $V^\mathbb{P}$, a contradiction.
\end{proof}

\section{Coding of Continuous Functions} \label{sec: coding}

For the reader's convenience we collect some selected facts about the coding of continuous functions that are going to find use in the next section.

Throughout this section, every tree $T$ is assumed to be a tree on $2^{<\kappa}$.

\begin{definition}
	Let $T$ be a tree and  $(T_\eta)_{\eta \in 2^{<\kappa}}$ a family of trees. Then $\langle T, (T_\eta)_{\eta \in 2^{<\kappa}} \rangle$ is a \textit{code for a continuous function} (or just \textit{code}) iff
	\begin{enumerate}	
        \item if $\eta_1 \lhd \eta_2$, then $[T_{\eta_2}] \subseteq [T_{\eta_1}]$
		\item if $\eta_1 \perp \eta_2$, then $[T_{\eta_1}] \cap [T_{\eta_2}] = \emptyset$
		\item $\bigcup_{\eta \in 2^i} [T_\eta] = [T]$ for each $i < \kappa$.
	\end{enumerate} 
\end{definition}

\begin{theorem}
	If $\mathcal{P}$ is a ${<}\kappa$-closed forcing notion, then $\mathbf{\Sigma}^1_1(\kappa)$ properties (i.e. analytic properties in the sense of the projective hierarchy on $\kappa^\kappa$) are absolute between $V$ and $V^\mathcal{P}$.
\end{theorem}
\begin{proof}
	See \cite{khomskii_kappa_reals}.
\end{proof}

\begin{lemma}
	Let $\langle T, (T_\eta)_{\eta \in 2^{<\kappa}} \rangle$ be a code. Then there exists a unique continuous function $g_{\langle T, (T_\eta)_{\eta \in 2^{<\kappa}} \rangle} : [T] \to 2^\kappa$ such that 
	\[
	g_{\langle T, (T_\eta)_{\eta \in 2^{<\kappa}} \rangle}^{-1}([\eta]) = [T_\eta]
	\]
	for all $\eta \in 2^{<\kappa}$.
\end{lemma}

\begin{proof}
	If we set $g(y) := \bigcup\{\eta \in 2^{<\kappa}: y \in [T_\eta]\}$, then it is easy to see that $g : [T] \to 2^\kappa$ is a well-defined continuous function and $g^{-1}([\eta]) = [T_\eta]$ for all $\eta \in 2^{<\kappa}$.	Since $([\eta])_{\eta \in 2^{<\kappa}}$ forms a clopen basis of $2^\kappa$, uniqueness is given.
\end{proof}

On the other hand, if $g : Y \to 2^\kappa$ is a continuous function where $Y \subseteq 2^\kappa$ is closed, then $\langle T, (T_\eta)_{\eta \in 2^{<\kappa}} \rangle$ is a code for $g$, where $T$ and the $T_\eta$'s are trees with $[T] = Y$ and $[T_\eta] = g^{-1}([\eta])$.

\begin{definition}
	For codes $c, c'$ define $c \preccurlyeq c' :\Leftrightarrow g_c \subseteq g_{c'}$.
\end{definition}
Clearly $\preccurlyeq$ is reflexive and transitive.

\begin{definition} \label{def: uniformly continuous}
	A function $g : Y \to Z$ with $Y,Z \subseteq 2^\kappa$ is uniformly continuous iff
	\[
	\forall i < \kappa\, \exists j(i) < \kappa\, \forall x \in Y:\ g''([x {\restriction} j(i)] \cap Y) \subseteq [g(x) {\restriction} i] \cap Z.
	\]
	The map $i \mapsto j(i)$ is the \textit{modulus of continuity} of $g$.
\end{definition}

\begin{fact} \label{fact: continuity absoluteness}
	The following statements are $\mathbf{\Pi}_1^1(\kappa)$ and therefore absolute for ${<}\kappa$-closed forcing extensions:
	\begin{itemize}
		\item $c$ is a code for a continuous function
		\item ``$[T] = [T']$'' for trees $T,T'$ 
		\item ``$c \preccurlyeq c'$'' for codes $c,c'$
		\item ``$g_c$ is a total function'' for a code $c$
		\item ``$\ran (g_c) \subseteq [T]$'' for a code $c$ and a tree $T$ 
		\item ``$g_c$ is uniformly continuous with modulus of continuity $i \mapsto j(i)$'' for a code $c$
	\end{itemize}
\end{fact}

Let now $Y \subseteq 2^\kappa$ be closed and $g: Y \to 2^\kappa$ be continuous. The above thus yields a method to continuously and uniquely extend $g$ to $\tilde{g}: Y^{(V^\mathcal{P})} \to (2^\kappa)^{(V^{\mathcal{P}})}$ \footnote{Here $Y^{(V^\mathcal{P})}$ is defined as $[T]^{(V^\mathcal{P})}$ for a ground model tree $T$ with $[T] = Y$. By the above fact, this definition does not depend on the choice of $T$.} for a ${<}\kappa$-closed forcing $\mathcal{P}$. To do so, choose a code $c$ for $g$ and evaluate it in $V^{\mathcal{P}}$. It is easy to prove that $\tilde{g} = (g_c)^{V^\mathcal{P}}$ is an extension of $g$; also observe that $\tilde{g}$ is independent of the chosen code $c$, since the statement $c \preccurlyeq c'$ is absolute by the above fact. Furthermore, we note that $\tilde{g}$ is the unique continuous extension of $g$, since $Y$ is dense in $Y^{V^\mathcal{P}}$.

In the future we will not be making a notational distinction between $g$ and $\tilde{g}$.

\section{Second Proof} \label{sec: second proof}

In this section we will construct a model in which every $X \subseteq 2^\kappa$ of size $|2^\kappa|$ can be uniformly continuously mapped onto $2^\kappa$. The construction closely follows Corazza's approach \cite{corazza}.

We will consider the same forcing iteration $\langle \mathbb{P}_\alpha, \dot{\mathbb{Q}}_\beta : \alpha \leq \kappa^{++}, \beta < \kappa^{++} \rangle$ with ${\leq}\kappa$-support as in the previous section. Additionally, we also choose $\dot{\mathbb{Q}}_\alpha$ to be $\kappa$-Sacks forcing (i.e. $f_\alpha \equiv 2$) for $\alpha = 0$ and for $\alpha$ with cofinality $\kappa^+$. We still assume $V \models |2^\kappa| = \kappa^+$, but $\kappa$ is only required to be inaccessible this time.

Since the forcing iteration is identical to the one in the previous section, Theorem \ref{th: easy inclusion} holds and thus
\[
V^\mathbb{P} \models [2^\kappa]^{\leq \kappa^+} \subseteq \mathcal{SN}.
\]

The other direction of the proof hinges on a technical lemma.

\begin{lemma} \label{lem: second proof technical}
	Let $p \in \mathbb{P}, F \in [\kappa^{++}]^{<\kappa}, i < \kappa$, $Y \in [2^\kappa]^{<\kappa}$ and a $\mathbb{P}$-name $\dot{\tau}$ be given such that $p$ forces $\dot{\tau} \in 2^\kappa$ and $\dot{\tau} \notin V$. Then we may find an $X \in [2^\kappa]^{<\kappa}$ and a sequence $(q_j)_{j < \kappa}$ of conditions below $p$ such that 
	\begin{itemize}
		\item $\forall j_1 < j_2 < \kappa:\ q_{j_2} \leq_{F,i} q_{j_1} \leq_{F,i} p$,
		\item $\forall j < \kappa:\ q_j \Vdash \exists x \in \check{X}:\ \dot{\tau} {\restriction} j = x {\restriction} j$ and
		\item $X \cap Y = \emptyset$.
	\end{itemize}
\end{lemma}

\begin{proof}
	If necessary, we may strengthen $p$ twice in the following manner:
	\begin{itemize}
		\item Firstly, since $|Y| < \kappa$ and $p \Vdash \dot{\tau} \notin \check{Y}$, we may find a name $\dot{\ell}$ for an ordinal less than $\kappa$ such that
		\[
		p \Vdash \forall y \in \check{Y}:\ \dot{\tau} {\restriction} \dot{\ell} \neq y {\restriction} \dot{\ell}.
		\]
		Property B* enables us to find a $p' \leq_{F,i} p$ and $\ell^* < \kappa$ with
		\[
		\forall y \in Y: p' \Vdash \ \dot{\tau} {\restriction} \ell^* \neq y {\restriction} \ell^*
		\]
		by restricting a maximal antichain deciding $\dot{\ell}$.
		\item  Secondly, we can find a $p'' \leq_{F,i} p'$ that is $(F,i)$-bounded (see Definition \ref{def: bounded}).
	\end{itemize}
	
	So without loss of generality assume that $p$ already has both these properties. We construct the sequence $(q_j)_{j < \kappa}$ inductively:
	\begin{itemize}
		\item $j=0$: Set $q_0 := p$.
		\item $j \to j+1$: Since		
		\[
		D_{j+1} := \{r \leq q_j: r \text{ decides } \dot{\tau} {\restriction} (j+1)\}
		\] is open dense below $q_j$, we may apply Lemma \ref{lem: bounded -> complete} to $q_j, F,i$ and $D_{j+1}$ \footnote{$q_j$ is $(F,i)$-bounded by Fact \ref{fact: bounded open}.} to get $q_{j+1}$ and $(s_g^{j+1})_{g \in C_{j+1}}$ such that $q_{j+1}$ is $(D_{j+1},F,i)$-complete as witnessed by $(s_g^{j+1})_{g \in C_{j+1}}$. Note that we have $q_{j+1} \leq_{F,i} q_j \leq_{F,i} p$. 
		\item $\delta$ is a limit: Find a $\leq_{F,i}$-lower bound $\tilde{q}_\delta$ of $(q_\ell)_{\ell < \delta}$. Just as in the successor step, apply Lemma \ref{lem: bounded -> complete} to $\tilde{q_\delta}, F,i$ and 
		\[
		D_\delta := \{r \leq \tilde{q_\delta}: r \text{ decides } \dot{\tau} {\restriction} \delta\}
		\]
		to get $q_\delta$ and $(s_g^\delta)_{g \in C_\delta}$.
	\end{itemize}
	
	By Lemma \ref{lem: C decreasing} we know that $(C_j)_{j < \kappa}$ is a decreasing sequence of non-empty sets of size less than $\kappa$; as such, the sequence is eventually constant. Let $j^*$ be the index at which this happens.
	
	Now define
	\[
	X := \{x \in 2^\kappa: \exists g \in C_{j^*}\, \forall j < \kappa:\ s_{g}^j \Vdash \dot{\tau} {\restriction} j = x {\restriction} j\}
	\]
	
	For $g \in C_{j^*}$ the sequence $(s_g^j)_{j < \kappa}$ is decreasing by Lemma \ref{lem: C decreasing}. Hence each $g \in C_{j^*}$ successfully interprets $\dot{\tau}$ as some unique $x \in X$, i.e.
	\[
	\forall g \in C_{j^*}\, \exists !\, x \in X\, \forall j < \kappa:\ s_g^j \Vdash \dot{\tau} {\restriction} j = x {\restriction} j.
	\]
	Since $ \forall y \in Y:\ p \Vdash\dot{\tau} {\restriction} \ell^* \neq y {\restriction} \ell^*$, we know that $X \cap Y = \emptyset$.
	
	Suppose now that $j \geq j^*$ and $r \leq q_{j}$. Then $r$ is compatible with $s_{g}^j$ for some $g \in C_{j} = C_{j^*}$ and we can find a $t \leq r, s_{g}^j$. But then $\exists x \in X:\ t \Vdash \dot{\tau} {\restriction} j = x {\restriction} j$, so we can conclude 
	\[
	q_j \Vdash \exists x \in \check{X}:\ \dot{\tau} {\restriction} j = x {\restriction} j.
	\]

    Since $|Y| < \kappa$, we may for each $j < j^*$ pick an arbitrary $x_j \in [\sigma_j] \backslash Y$, where 
    \[
    q_j \Vdash \dot{\tau} {\restriction} j = \check{\sigma}_j
    \]
    and add those $x_j$ to $X$, thereby ensuring that
	\[
	q_j \Vdash \exists x \in \check{X}:\ \dot{\tau} {\restriction} j = x {\restriction} j
	\]
	holds for $j < j^*$ as well.
\end{proof}

We are now preparing to show that every new real $\dot{\tau}^G \in V^\mathbb{P}$ can be mapped onto the first Sacks real $\dot{s}_0$ via a uniformly continuous ground model function. In what follows we shall slightly abuse notation; for $p \in \mathbb{P}$ and a node $\eta \in p(0)$ denote by $p^{[\eta]}$ the condition that satisfies $p^{[\eta]}(0) = p(0)^{[\eta]}$ and $p^{[\eta]}(\beta) = p(\beta)$ for $\beta > 0$.

\begin{lemma} \label{lem: crucial lemma one step}
	Let $p \in \mathbb{P}, F \in [\kappa^{++}]^{<\kappa}$ and $i, \ell < \kappa$. Let furthermore a $\mathbb{P}$-name $\dot{\tau}$ be given such that $p$ forces $\dot{\tau} \in 2^\kappa$ and $\dot{\tau} \notin V$. Then we can find a $q \leq_{F,i} p$, an $\ell^* > \ell$ and a family $(A_\eta)_{\eta \in \splitt_i(p(0))}$ of non-empty, clopen sets with
	\begin{itemize}
 		\item if $\eta_1 \neq \eta_2$, then $A_{\eta_1} \cap A_{\eta_2} = \emptyset$,
		\item $A_\eta = \bigcup_{\nu \in S_\eta} [\nu]$ for some $S_\eta \subseteq 2^{\ell^*}$ and
		\item $q^{[\eta]} \Vdash \dot{\tau} \in A_\eta$.
	\end{itemize}
\end{lemma}

\begin{proof}
	Enumerate $\splitt_i(p(0))$ as $(\eta_k)_{k < \delta}$ with  $\delta < \kappa$. We inductively construct sequences $((t_j^k)_{j < \kappa})_{k < \delta}$ and a sequence of sets $(X_k)_{k < \delta}$: assuming that $X_m$ has been constructed for $m < k$, apply Lemma \ref{lem: second proof technical} to $p^{[\eta_k]}$ and $Y := \bigcup_{m < k} X_m$ to get a sequence of conditions $(t_j^k)_{j < \kappa}$ and a set $X_k$.
	
	Now let $\ell^* > \ell$ be an ordinal large enough such that whenever $j_1 \neq j_2$ for $j_1, j_2 < \delta$ and $x_1 \in X_{j_1}, x_2 \in X_{j_2}$ then $x_1 {\restriction} \ell^* \neq x_2 {\restriction} \ell^*$. This is possible, since the $(X_k)_{k < \delta}$ are disjoint and of size less than $\kappa$. This allows us to define 
	\[
	A_{\eta_k} := \bigcup_{x \in X_k} [x {\restriction} \ell^*].
	\]
	
	Now we glue the conditions $t^k_{\ell^*}$ together in the following way: Set 
	\[
	q(0) := \bigcup_{k < \delta} t^k_{\ell^*}(0)
	\]
	and for $\beta > 0$ define $q(\beta)$ inductively; assuming $q {\restriction} \beta$ has been defined, set 
	\[
	q(\beta) := t^{\dot{k}}_{\ell^*}(\beta), \text{ where $\dot{k}$ is the unique ordinal less than $\delta$ such that $t^{\dot{k}}_{\ell^*} {\restriction} \beta \in \dot{G}_\beta$}.
	\]
	Note that $(t_{\ell^*}^k {\restriction} \beta)_{k< \delta}$ is a maximal antichain below $q {\restriction} \beta$. For limit $\beta$ observe that $\supp(q {\restriction} \beta) \subseteq \bigcup_{k < \delta} \supp(t^k_{\ell^*} {\restriction} \beta)$. By Lemma \ref{lem: second proof technical} we know that $t_{\ell^*}^k \leq_{F,i} p^{[\eta_k]}$ for each $k < \delta$, and since $\splitt_i(p(0)) = \{\eta_k : k < \delta\}$, we can inductively conclude $q {\restriction} \beta \leq_{F \cap \beta,i} p {\restriction} \beta$ for all $\beta \leq \kappa^{++}$.
	
	To see the last claim, only note that $q^{[\eta]} = t_{\ell^*}^k$ for some $k < \delta$, therefore by Lemma \ref{lem: second proof technical} we have $t_{\ell^*}^k \Vdash \exists x \in \check{X_k}:\ \dot{\tau} {\restriction} \ell^* = x {\restriction} \ell^*$ and thus
	\[
	q^{[\eta]} \Vdash \dot{\tau} \in A_\eta
	\]
	by definition of $A_\eta$.
\end{proof}

\begin{remark} \label{rem: a_eta minimality}
	Without loss of generality, we may choose the $A_\eta$ in the previous lemma to be minimal in the following sense: for each $\nu \in 2^{\ell^*}$ we have $\nu \in S_\eta$ iff there exists a condition $r \leq q^{[\eta]}$ such that $r \Vdash \dot{\tau} \in [\nu]$.
\end{remark}

\begin{lemma} \label{lem: key}
	Let $p \in \mathbb{P}$ and a $\mathbb{P}$-name $\dot{\tau}$ be given such that $p$ forces $\dot{\tau} \in 2^\kappa$ and $\dot{\tau} \notin V$. Then there exists a $q \leq p$, a sequence $(\ell^*(i))_{i < \kappa}$ and a family $(A_\eta)_{\eta \in \splitt(q(0))}$ such that $A_\eta \subseteq 2^\kappa$ are non-empty, clopen and:
	\begin{itemize}
		\item if $\eta_1 \lhd \eta_2$, then $A_{\eta_2} \subseteq A_{\eta_1}$,
		\item if $\eta_1 \perp \eta_2$, then $A_{\eta_1} \cap A_{\eta_2} = \emptyset$,
  		\item if $\eta \in \splitt_i(q(0))$, then $A_\eta = \bigcup_{\nu \in S_\eta} [\nu]$ for some $S_\eta \subseteq 2^{\ell^*(i)}$ and
		\item $q^{[\eta]} \Vdash \dot{\tau} \in A_\eta$.
	\end{itemize}
\end{lemma}

\begin{proof}
	We shall construct a fusion sequence $\langle q_i, F_i: i < \kappa \rangle$ and a strictly increasing sequence $(\ell^*(i))_{i < \kappa}$ of ordinals less than $\kappa$ such that $q_{i+1}$ has the required properties for $(A_\eta)_{\eta \in \splitt_i(q_i(0))}$.
	\begin{itemize}
		\item $i=0$: Set $q_0 := p$ and $F_0 := \{0\}$.
		\item $i \to i+1$: Applying Lemma \ref{lem: crucial lemma one step} to $q_i, F_i, i$ and $\sup_{j < i}\ell^*(j)$ yields a $\tilde{q} \leq_{F_i,i} q_i$, an ordinal $\ell^*(i)$ and a family $(A^i_\eta)_{\eta \in \splitt_i(q_i(0))}$. Set $q_{i+1} := \tilde{q}$. Define $F_{i+1}$ with a bookkeeping argument.
		\item $\delta$ is a limit: Set $q_\delta$ to a fusion limit of $\langle q_j, F_j: j < \delta \rangle$ and $F_\delta := \bigcup_{j < \delta} F_j$.
	\end{itemize}
	
	Let now $q_\kappa$ be a fusion limit of the sequence $\langle q_i, F_i: i < \kappa \rangle$ and for $\eta \in \splitt(q_\kappa(0))$ define
	\[
	A_\eta := A_\eta^{i(\eta)},
	\]
	where $i(\eta)$ is the unique $i$ with $\eta \in \splitt_i(q_\kappa(0)) = \splitt_i(q_i(0))$. We claim $q_\kappa$ has the properties we are looking for: 
	\begin{itemize}
		\item The third property holds by Lemma \ref{lem: crucial lemma one step}.
		\item If we assume the $A_\eta^{i(\eta)}$ have been chosen minimal in each step as in Remark \ref{rem: a_eta minimality}, then the first property holds. To see this, take $\nu \lhd \eta$ and $\eta' \in S_\eta$, where $S_\eta$ is as stated in Lemma \ref{lem: crucial lemma one step}. By Remark \ref{rem: a_eta minimality} there is a condition $r \leq q_{i(\eta) + 1}^{[\eta]}$ such that $r \Vdash \dot{\tau} \in [\eta']$. But then $r \leq q_{i(\eta) + 1}^{[\eta]} \leq q_{i(\nu) + 1}^{[\nu]}$, and thus $\eta' {\restriction} \ell^*(i(\nu)) \in S_\nu$. Hence $A_\eta \subseteq A_\nu$.
		\item For the second property, let $\eta, \nu \in \splitt(q_\kappa(0))$ with $\eta \perp \nu$ be given. Without loss of generality assume $i(\nu) \leq i(\eta)$ and find an $\eta'$ with $\eta' \lhd \eta$ and $\eta' \in \splitt_{i(\nu)}(q_\kappa(0))$; by incompatibility we have $\eta' \neq \nu$. By the first property we have $A_{\eta} \subseteq A_{\eta'}$ and Lemma \ref{lem: crucial lemma one step} yields $A_{\eta'} \cap A_\nu = \emptyset$.
		\item To see the fourth property, let $\eta \in \splitt(q_\kappa(0))$. Then we have $q_\kappa^{[\eta]} \leq q_{i(\eta)+1}^{[\eta]}$ and therefore
		\[
		q_\kappa^{[\eta]} \Vdash \dot{\tau} \in A_\eta,
		\]
		as desired. \qedhere
	\end{itemize}
\end{proof}

The following lemma substitutes in for Tietze's Extension Theorem from the countable case in \cite{corazza}. Recall the notion of superclosure (page \pageref{def: superclosed}) and uniform continuity (Definition \ref{def: uniformly continuous}).
\begin{lemma} \label{lem: extension}
	Let $Y,Z \subseteq 2^\kappa$, where $Y$ is closed and $Z$ is superclosed, and let $g: Y \to Z$ be uniformly continuous. Then $g$ can be extended to a uniformly continuous function $\tilde{g} : 2^\kappa \to Z$ with the same modulus of continuity as $g$.
\end{lemma}

\begin{proof}
	The open set $2^\kappa \backslash Y$ can be be written as a union of basic open sets $\bigcup_{i < \delta} [\nu_i]$ with $\delta \leq \kappa, \nu_i \in 2^{\lambda_i}$ such that the $\nu_i$ are minimal, i.e. 
	\[
	\forall j < \lambda_i:\ [\nu_i {\restriction} j] \cap Y \neq \emptyset.
	\]
	In particular the sets $[\nu_i]$ are pairwise disjoint. We will define $\tilde{g}$ to extend $g$ and to be constant on each $[\nu_i]$.
	
	For $i < \delta$ define 
	\[
	S(i) := \{\eta \in 2^{<\kappa} : \exists j < \lambda_i : g''([\nu_i {\restriction} j ] \cap Y) \subseteq [\eta] \cap Z\}.
	\] 
	Clearly $S(i)$ consists of pairwise $\lhd$-compatible elements; furthermore, for each $\eta \in S(i)$ we have $[\eta] \cap Z \neq \emptyset$. Since $Z$ is superclosed \footnote{If $|S(i)| = \kappa$, then $[\bigcup S(i)]$ is not defined, so work with $\{\bigcup S(i)\}$ instead.}, we have $Z \cap [\bigcup S(i)] \neq \emptyset$. We may thus set $\tilde{g} {\restriction} [\nu_i]$ to be constant with an arbitrary, fixed value from $Z \cap [\bigcup S(i)]$.
	
	It remains to check that $\tilde{g} : 2^\kappa \to Z$ is uniformly continuous with the same modulus of continuity as $g$. To this end, let $i < \kappa$ and $x \in 2^\kappa$. Consider $y \in [x {\restriction} j(i)]$. 
	\begin{itemize}
		\item If $x \in Y$, the interesting case is $y \notin Y$, hence $y \in [\nu_\ell]$ for some $\ell < \delta$. But then $j(i) < \lambda_\ell$ and 
		\[
		g''([x {\restriction} j(i)] \cap Y) \subseteq [g(x) {\restriction} i] \cap Z,
		\]
		hence by definition $g(x) {\restriction} i \in S(\ell)$ and thus $\tilde{g}(y) \in [\bigcup S(\ell)] \cap Z \subseteq [\tilde{g}(x) {\restriction} i] \cap Z$.
		\item On the other hand, if $x \notin Y$, then $x$ is in $[\nu_k]$ for some $k < \delta$. 
		
		Now one possibility is $[x {\restriction} j(i)] \cap Y = \emptyset$, in which case $j(i) \geq \lambda_k$ and $\tilde{g}$ is constant on $[x {\restriction} j(i)]$, therefore $\tilde{g}(y) = \tilde{g}(x) \in [\tilde{g}(x) {\restriction} i] \cap Z$. 
		
		The other possibility is $[x {\restriction} j(i)] \cap Y \neq \emptyset$ and thus $j(i) < \lambda_k$, and there once again are two cases to be distinguished:
		\begin{itemize}
			\item If $y \in Y$, then $g(y) {\restriction} i \in S(k)$ and thus 
			\[
			\tilde{g}(y) = g(y) \in [g(y) {\restriction} i] \cap Z = [\tilde{g}(x) {\restriction} i] \cap Z.
			\] 
			\item On the other hand, if $y \notin Y$, then $y \in [\nu_\ell]$ for some $\ell < \delta$. Since $[y {\restriction} j(i)] \cap Y = [x {\restriction} j(i)] \cap Y \neq \emptyset$, we can also conclude $j(i) < \lambda_\ell$. This means that $S(k) \cap S(\ell)$ contains an $\eta$ with $\dom(\eta) = i$ (namely $g(x') {\restriction} i$ for some $x' \in [x {\restriction} j(i)] \cap Y$) and thus $\tilde{g}(y) \in [\eta] \cap Z = [\tilde{g}(x) {\restriction} i] \cap Z$. \qedhere
		\end{itemize}
	\end{itemize}
\end{proof}

A natural question the inquisitive reader might pose is the validity of Lemma \ref{lem: extension} in case of the additional “artificial” assumption of superclosure being dropped. Indeed, the statement no longer holds; in \cite{schlicht_luecke} the authors observe, for instance, that the closed subset $Y$ of $2^\kappa$ consisting of all sequences with finitely many zeroes is not a retract of $2^\kappa$ (and thus the identity $Y \to Y$ cannot be extended to a continuous function on $2^\kappa$).

\begin{theorem} \label{th: map tau onto s0}
	Let $p \in \mathbb{P}$ force $\dot{\tau} \in 2^\kappa$ and $\dot{\tau} \notin V$. Then there exists a $q \leq p$ and a uniformly continuous function $f^* : 2^\kappa \to [q(0)]$ in $V$ such that
	\[
	q \Vdash f^*(\dot{\tau}) = \dot{s}_0,
	\]
	where $\dot{s}_0$ denotes the first Sacks real.
\end{theorem}

\begin{proof}
	Lemma \ref{lem: key} yields a condition $q \leq p$, a sequence $(\ell^*(i))_{i < \kappa}$ and a family $(A_\eta)_{\eta \in \splitt(q(0))}$ of clopen sets. This family codes \footnote{If we define
		\[
		A'_\eta :=
		\begin{cases}
			A_\nu, \text{ where } \nu = \min\{\rho \in \splitt(q(0)): \eta \lhd \rho\} &\text{ for } \eta \in q(0) \\
			\emptyset & \text{ for } \eta \notin q(0)
		\end{cases}
		\]
		and set $c := \langle T, (T_\eta)_{\eta \in 2^{<\kappa}} \rangle$, where $[T] = Y$ and $[T_\eta] = A'_\eta \cap Y$, then it can be seen that $c$ is a code for a continuous function; to avoid abuse of notation, we could also be working with $c$ at this point instead.} a continuous function 
	\begin{gather*}
		f : Y \to [q(0)] \\
		y \mapsto \bigcup\{\eta: y \in A_\eta\}
	\end{gather*}
	defined on the closed set $Y = \bigcap_{i < \kappa} \bigcup_{\eta \in \splitt_i(q(0))} A_\eta$ \footnote{Note that the ${<}\kappa$-box topology is closed under intersections of size less than $\kappa$.}.
	
	We claim that $f$ is in fact uniformly continuous. To see this, let $i < \kappa$ and $x \in Y$. Choose $\eta$ such that $x \in A_\eta$ and $\eta \in \splitt_{i}(q(0))$. Recall that $A_\eta$ is of the form (see Lemma \ref{lem: key})
	\[
	A_\eta = \bigcup_{\nu \in S_\eta} [\nu].
	\]
	with $S_\eta \subseteq 2^{\ell^*(i)}$.
	Therefore we have
	\[
	f''([x {\restriction} \ell^*(i)]) \subseteq [\eta] \subseteq [f(x) {\restriction} i],
	\]
	since $i \leq \dom(\eta)$.
	
	Since the set $[q(0)]$ is superclosed, we can apply Lemma \ref{lem: extension} and extend $f$ to a uniformly continuous function $f^* : 2^\kappa \to [q(0)]$. Lastly, we have 
	\[
	q^{[\eta]} \Vdash \dot{\tau} \in A_\eta \subseteq (f^*)^{-1}([\eta])
	\]
	for each $\eta \in \splitt(q(0))$ and thus 
	\[
	q \Vdash f^*(\dot{\tau}) = \dot{s}_0.
	\]
\end{proof}

As in the classical case, every $\kappa$-Sacks condition can be decomposed into $|2^\kappa|$ many $\kappa$-Sacks conditions in a continuous way. The last auxiliary result we require formalizes this:

\begin{lemma} \label{lem: continuous reading}
	Let $p \in \mathbb{P}$ be a condition. Then there exists a uniformly continuous $g^* : [p(0)] \to 2^{\kappa}$ \footnote{Recall that $p(0) \subseteq 2^{<\kappa}$.} and for each $x \in 2^\kappa \cap V$ a condition $q_x \leq p$ such that 
	\[
	q_x \Vdash \check{x} = g^*(\dot{s}_0). 
	\]
\end{lemma}

\begin{proof}
	First we construct a function $e = (e_1, e_2) : p(0) \to 2^{<\kappa} \times 2^{<\kappa}$ with the following properties:
	\begin{itemize}
		\item $e$ is continuous and monotone increasing
		\item $e(\emptyset) = (\emptyset, \emptyset)$
		\item if $\eta \notin \splitt(p(0))$, then $e(\eta ^\frown i) = e(\eta)$
		\item if $\eta \in \splitt_j(p(0))$ and
		\begin{itemize}
			\item $j$ is a successor, then $e(\eta^\frown i) = (e_1(\eta)^\frown i, e_2(\eta))$
			\item $j=0$ or $j$ is a limit, then $e(\eta^\frown i) = (e_1(\eta), e_2(\eta)^\frown i)$.
		\end{itemize}
	\end{itemize}
	Define $\hat{g} = (\hat{g}_1, \hat{g}_2) : [p(0)] \to 2^{\kappa} \times 2^\kappa$ as $\hat{g}_k(b) = \bigcup\{e_k(b {\restriction} i) : i < \kappa\}$ for $k=1,2$. Since $[p(0)]$ is perfect, $\hat{g}$ is well-defined. Moreover, $\hat{g}$ maps the clopen basis $([\eta])_{\eta \in \splitt(p(0))}$ to a clopen basis of $2^\kappa \times 2^\kappa$, hence it is a homeomorphism.
	
	For $x \in 2^\kappa$ now set $q_x(0) := \{\eta \in 2^{<\kappa}:\ \exists y \in \hat{g}^{-1}(\{x\} \times 2^\kappa):\ \eta \lhd y\}$ and $q_x(\beta) = p(\beta)$ for $\beta > 0$. We claim that $q_x$ is a condition; it is sufficient to check that $q_x(0)$ is. We check \ref{ax: splitting unbounded}, \ref{ax: superclosed} and \ref{ax: splitting continuous}; the rest is left as an exercise for the reader.
	
	\begin{itemize}
		\item \ref{ax: splitting unbounded}: Since $\hat{g}$ is a homeomorphism, it follows that $\hat{g}^{-1}\left(\{x\} \times 2^\kappa\right)$ is a perfect set.
		\item \ref{ax: superclosed}: Let $\left(\eta_j\right)_{j<\delta}$ with $\eta_j \in q_x(0)$ be a strictly increasing sequence of length $\delta<\kappa$. Set $\eta:=\bigcup_{j<\delta} \eta_j$. It easily follows that $\nu \in q_x(0) \Leftrightarrow x \in [e_1(\nu)]$ for all $\nu \in 2^{<\kappa}$. As $e_1(\eta)=\bigcup \{e_1(\eta_j) : j < \delta\}$ we see that $x \in [e_1(\eta)]$, hence $\eta \in q_x(0)$.
		\item \ref{ax: splitting continuous}: Let $(\eta_j)_{j<\delta}$ be a strictly increasing sequence of length less than $\kappa$ such that $\eta_j \in \splitt(q_x(0))$. Again, set $\eta:=\bigcup_{j<\delta} \eta_j$. It follows that $\eta \in \splitt_\lambda(p(0))$ for some limit $\lambda$. But as $x \in [e_1(\eta)]$ and $e_1(\eta)=e_1(\eta^\frown i)$, we have $\eta ^\frown i \in q_x(0)$ for $i=0,1$, hence $\eta \in \splitt(q_x(0))$.
	\end{itemize}
	
	Clearly $q_x \leq p$. Now set $g^* := \hat{g}_1$. Then $g^*$ is uniformly continuous with modulus of continuity
	\[
	i \mapsto j(i) := \sup\{\dom(\nu) + 1: \nu \in \splitt_{i+1}(p(0))\}.
	\]
	Finally, we have $q_x \Vdash \check{x} = g^*(\dot{s}_0)$ by the definition of $q_x(0)$ and the absoluteness (see Fact \ref{fact: continuity absoluteness}) of the statement
	\[
	\ran(g^* \restriction [q_x(0)]) \subseteq \{x\}. \qedhere
	\]
\end{proof}

\begin{theorem}
	In $V^\mathbb{P}$, every subset $X$ of $2^\kappa$ of size $\kappa^{++}$ can be uniformly continuously mapped onto $2^\kappa$.
\end{theorem}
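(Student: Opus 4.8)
The plan is to construct the required uniformly continuous surjection $f$ by hand, tailored to the given $X$, using Lemma \ref{L8} as the extraction engine, Lemma \ref{L6} to guarantee that every prescribed target is reachable, and Lemma \ref{L7} to pass from a partial to a total map. First I would record the two standing facts about the model (exactly as in the proof of Theorem \ref{T5}): since each $\mathbb{P}_\beta$ has a dense subset of size $\kappa^+$ and is $\kappa$-proper, every intermediate model $V^{\mathbb{P}_\beta}$ satisfies $CH$ at $\kappa$, and $\mathbb{P}$ has the $\kappa^{++}$-c.c.; hence $\mathfrak{c}=2^\kappa=\kappa^{++}$ and $|2^\kappa\cap V^{\mathbb{P}_\beta}|\leq\kappa^+$ for every $\beta<\kappa^{++}$. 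As $|X|=\mathfrak{c}=\kappa^{++}$, no $V^{\mathbb{P}_\beta}$ contains $X$, so $X$ has elements new over every stage. The object I actually build is a nested, separating clopen scheme $(A_\eta)_{\eta\in 2^{<\kappa}}$ of width $<\kappa$ at each level, so that $f(x):=$ the unique $b$ with $x\in\bigcap_{i<\kappa}A_{b\restriction i}$ is uniformly continuous on the closed set $\bigcap_i\bigcup_{|\eta|=i}A_\eta$, to be extended to all of $2^\kappa$ by Lemma \ref{L7}. Surjectivity on $X$ then means exactly that for every branch $y$ the fibre $E^y:=\bigcap_i A_{y\restriction i}$ meets $X$; since $X$ is an arbitrary set of size $\mathfrak{c}$, no fixed scheme can succeed, and the whole content of the recursion is to route a distinct element of $X$ into each $E^y$.

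The engine for the routing is Lemma \ref{L8} together with Lemma \ref{L6}. I would fix an enumeration $\{y_\delta:\delta<\kappa^{++}\}$ of $2^\kappa$, arranged by bookkeeping so that each $y_\delta\in V^{\mathbb{P}_{\gamma_\delta}}$ for a strictly increasing sequence of stages $\gamma_\delta$ with $\operatorname{cf}(\gamma_\delta)=\kappa^+$ (there are $\kappa^{++}$ such stages), and at step $\delta$ choose an $x_\delta\in X$ that is new over $V^{\mathbb{P}_{\gamma_\delta}}$. Since the iterand $\dot{\mathbb{Q}}_{\gamma_\delta}=\dot{\mathbb{S}}$ is Sacks, the tail forcing from $\gamma_\delta$ has first coordinate $\dot{\mathbb{S}}$, so Lemma \ref{L8} applies to the tail name for $x_\delta$ and yields a condition and a clopen scheme continuously mapping $x_\delta$ onto the stage-$\gamma_\delta$ Sacks real. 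By Lemma \ref{L6} the relevant Sacks perfect set decomposes as $2^\kappa\times 2^\kappa$ with \emph{perfect} fibres, each of which is a Sacks condition; as $y_\delta\in V^{\mathbb{P}_{\gamma_\delta}}$, the fibre over $y_\delta$ is coded there, so I may restrict the stage-$\gamma_\delta$ coordinate to that fibre and thereby force the composite (decode via Lemma \ref{L8}, then project via Lemma \ref{L6}) to send $x_\delta$ to $y_\delta$. Because distinct points are handled at distinct Sacks stages $\gamma_\delta$, these routings do not interfere, and the ``disjoint interpretations of the name'' produced inside the proof of Lemma \ref{L8} let me separate $x_\delta$ into a clopen tube lying along the branch $y_\delta$ and disjoint from the tubes of earlier $x_{\delta'}$ whose targets $y_{\delta'}$ split off from $y_\delta$.

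Finally I would assemble these into a single global scheme. Each routing step contributes a $<\kappa$-fusion modification of $(A_\eta)$ (this is precisely the shape of the inner construction in Lemma \ref{L8}), so the width at every level $i<\kappa$ stays below $\kappa$ and $f$ is genuinely uniformly continuous; the outer recursion of length $\kappa^{++}$, driven by the bookkeeping on $(y_\delta)$, ensures that every branch $y$ receives some $x_\delta\in X$ with $f(x_\delta)=y$, i.e. $f[X]=2^\kappa$. The main obstacle, which I expect to absorb most of the work, is exactly this coordination: uniform continuity of the \emph{single} function $f$ is a length-$\kappa$ fusion phenomenon controlling only $<\kappa$ data per level, whereas surjectivity is a $\kappa^{++}$-sized demand; the two are reconciled by the $\kappa^{++}$-c.c. and $CH$ in the intermediate models (so that at stage $\delta$ only $\leq\kappa^+$, and locally $<\kappa$, pieces of the scheme are relevant and must be preserved) and by the fact that Lemma \ref{L8} routes one named real at a time in a manner compatible with fusion, while Lemma \ref{L6} keeps every target fibre perfect and hence reachable. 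Composing with the continuous surjection $E_p\to 2^\kappa$ of Lemma \ref{L6} if one prefers to phrase the target as a single Sacks perfect set, this produces the desired uniformly continuous map of $X$ onto $2^\kappa$.
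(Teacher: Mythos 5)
Your proposal has a genuine gap, and it is structural rather than technical. You attempt to build the surjection \emph{directly}, by a recursion of length $\kappa^{++}$ which at step $\delta$ applies Lemma \ref{L8} to a tail name for $x_\delta$ and then ``restricts the stage-$\gamma_\delta$ coordinate'' to the fibre over $y_\delta$ from Lemma \ref{L6}, so as to make the composite send $x_\delta$ to $y_\delta$. But the conclusions of Lemma \ref{L8}, and of passing to the fibre condition $q_{y_\delta}$, are \emph{forcing} statements: they hold only below the particular condition $q$ (resp.\ $q_{y_\delta}$) constructed there, and nothing puts that condition into the actual generic filter. Working in $V^{\mathbb{P}}$ the generic is already fixed, so for each $\delta$ the ``routing'' $f(x_\delta)=y_\delta$ is merely forced below some condition that the generic need not contain; in the final model there is no reason it holds for even one $\delta$. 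This is exactly the trap the paper's proof is built to avoid: it never constructs the surjection at all. It argues contrapositively: assume every uniformly continuous $f$ misses some $F(f)\notin f''X$; by a L\"owenheim--Skolem argument fix $\beta$ of cofinality $\kappa^+$ with $V^{\mathbb{P}_\beta}$ closed under $F$; then for an \emph{arbitrary} tail name $\dot{\tau}$ and an \emph{arbitrary} condition $p$ forcing $\dot{\tau}\notin V^{\mathbb{P}_\beta}$, Lemmas \ref{L8}, \ref{L7} and \ref{L6} yield a uniformly continuous $h\in V^{\mathbb{P}_\beta}$ and, for the single diagonal point $x=F(h)$, a condition $q_x\leq p$ forcing $h(\dot{\tau})=F(h)$, hence $q_x\Vdash\dot{\tau}\notin\dot{X}$. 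Because this happens below every $p$ and for every name, it is a \emph{density} argument, and density is what converts forced statements into truth in $V^{\mathbb{P}}$: one concludes $X\subset V^{\mathbb{P}_\beta}$, so $|X|\leq\kappa^+$. The quantifier ``for every $p$'' is indispensable, and it is precisely what your per-point routing lacks.

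The cardinality mismatch you flag yourself is a second, independent obstruction, not a difficulty that the $\kappa^{++}$-c.c.\ and $CH$ can ``absorb'': a uniformly continuous function on $2^\kappa$ is coded by a scheme of size $2^{<\kappa}=\kappa$, and fusion sequences in these forcings have length at most $\kappa$, so a recursion of length $\kappa^{++}$ in which every step makes a nontrivial fusion-style refinement of the \emph{same} scheme, while preserving all earlier commitments, cannot be carried out --- after $\kappa$ many refinements there is no room left. In the paper this tension is dissolved rather than resolved: surjectivity is never certified fibre by fibre; the only point that ever needs to be ``hit'' is the one point $F(h)$, and the surjection onto all of $2^\kappa$ exists only because its nonexistence (for every uniformly continuous $f$ simultaneously) is what traps $X$ inside $V^{\mathbb{P}_\beta}$. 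So while you correctly identified the three ingredients (Lemmas \ref{L8}, \ref{L6}, \ref{L7}) and the role of stages of cofinality $\kappa^+$, the architecture you propose for assembling them does not produce a proof.
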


\begin{proof}
	Assume that $\dot{X}$ is a $\mathbb{P}$-name for a subset of $2^\kappa$ such that
	\[
	\Vdash_{\mathbb{P}} \forall h \text{ uniformly continuous function } \exists y \in 2^\kappa:\ y \notin h'' \dot{X}.
	\]
	We will show $\exists \alpha^* < \kappa^{++}:\ \Vdash_\mathbb{P} \dot{X} \subseteq V^{\mathbb{P}_{\alpha^*}}$, thus $\Vdash_\mathbb{P} |\dot{X}| \leq \kappa^+$.
	
	By our assumption on $\dot{X}$ and $\mathbb{P}$ satisfying the $\kappa^{++}$-c.c. we get
	\begin{align*}
		\forall \alpha < \kappa^{++}\, \forall& \dot{h} \text{ $\mathbb{P}_\alpha$-name for a uniformly continuous function } \\
		\exists& \beta < \kappa^{++}, \beta \geq \alpha \, \exists y \text{ $\mathbb{P}_\beta$-name for a real}:\ \Vdash_{\mathbb{P}} \dot{y} \notin \dot{h}'' \dot{X}.
	\end{align*}
	To increase legibility, let the ellipsis $(\dots)$ denote the four quantifications in the above statement. By interpreting the name $\dot{X}$ partially in the intermediate model $V^{\mathbb{P}_\beta}$, i.e. by identifying $\dot{X}$ with a canonical $\mathbb{P}_\beta$-name for a $\mathbb{P}_{\beta, \kappa^{++}}$-name, we get
	\[
	(\dots):\ \Vdash_{\mathbb{P}_\beta} \Vdash_{\mathbb{P}_{\beta, \kappa^{++}}} \dot{y} \notin \dot{h}'' \dot{X}.
	\]
	Keep in mind that $\dot{y}, \dot{h}$ are both $\mathbb{P}_\beta$-names, since $\beta \geq \alpha$.

	Without loss of generality assume that the function $\alpha \mapsto \beta(\alpha)$ maps to the minimal $\beta$ for which the statement holds. Observe that, crucially, since every continuous function $h : 2^\kappa \to 2^\kappa$ can be coded by an element of $2^\kappa$ (see Section \ref{sec: coding}), no new functions of the kind appear at stages of cofinality $>\kappa$ (Lemma \ref{lem: no collapse}). Therefore we can easily find a fixed point of the function $\alpha \mapsto \beta(\alpha)$ with cofinality $\kappa^+$; call it $\alpha^*$. For $\alpha^*$ we thus know that
	\[
	V^{\mathbb{P}_{\alpha^*}} \models \forall h \text{ uniformly continuous function } \exists y \in 2^\kappa:\ \Vdash_{\mathbb{P}_{\alpha^*, \kappa^{++}}} \check{y} \notin h'' \dot{X}.
	\]
	For the remainder of this proof we will be working within $V^{\mathbb{P}_{\alpha^*}}$. We wish to show $\Vdash_{\mathbb{P}_{\alpha^*, \kappa^{++}}} \dot{X} \subseteq V^{\mathbb{\mathbb{P}_{\alpha^*}}}$. 
	
	Let thus $p \in \mathbb{P}_{\alpha^*, \kappa^{++}}$ and $\dot{\tau}$ be a $\mathbb{P}_{\alpha^*, \kappa^{++}}$-name such that $p$ forces $\dot{\tau} \in 2^\kappa$ and $\dot{\tau} \notin V^{\mathbb{P}_{\alpha^*}}$. Theorem \ref{th: map tau onto s0} applied within $V^{\mathbb{P}_{\alpha^*}}$ (recall that the tail iteration $\mathbb{P}_{\alpha*, \kappa^{++}}$ has the same structure as the full iteration) yields a $q \leq p$ and a uniformly continuous function $f^*: 2^\kappa \to [q(0)]$ such that $q \Vdash_{\mathbb{P}_{\alpha^*, \kappa^{++}}} f^*(\dot{\tau}) = \dot{s_0}$. Likewise, Lemma \ref{lem: continuous reading} applied to $q$ gives us a uniformly continuous function $g^*: [q(0)] \to 2^\kappa$ and conditions $(q_x)_{x \in 2^\kappa \cap V^{\mathbb{P}_{\alpha^*}}}$ with $q_x \Vdash_{\mathbb{P}_{\alpha^*, \kappa^{++}}} \check{x} = g^*(\dot{s_0})$.
	
	Now let $x \in 2^\kappa \cap V^{\mathbb{P}_{\alpha^*}}$ be arbitrary. By construction we have $q_x \Vdash \check{x} = (g^*\circ f^*)(\dot{\tau})$. For the uniformly continuous function $(g^* \circ f^*) : 2^\kappa \to 2^\kappa$ we can by our assumption on $\alpha^*$ find a $y \in 2^\kappa \cap V^{\mathbb{P}_{\alpha^*}}$ with $\Vdash_{\mathbb{P}_{\alpha^*, \kappa^{++}}} \check{y} \notin (g^* \circ f^*)'' \dot{X}$. The condition $q_y$ thus forces $\dot{\tau} \notin \dot{X}$. Since $p$ and $\dot{\tau}$ were arbitrary, we may conclude
	\[
	\Vdash_{\mathbb{P}_{\alpha^*, \kappa^{++}}} \dot{X} \subseteq V^{\mathbb{\mathbb{P}_{\alpha^*}}}.
	\]
	Thus we have shown $V \models\ \Vdash_{\mathbb{P}_{\alpha^*}} \Vdash_{\mathbb{P}_{\alpha^*, \kappa^{++}}} \dot{X} \subseteq V^{\mathbb{\mathbb{P}_{\alpha^*}}}$, which finishes the proof.
\end{proof}

It is easy to see that the uniformly continuous image of a strong measure zero set remains strong measure zero; thus we have shown 
\[
V^\mathbb{P} \models \mathcal{SN} \subseteq [2^\kappa]^{\leq \kappa^+}.
\]

\begin{corollary}
	$V^{\mathbb{P}} \models \mathcal{SN} = [2^\kappa]^{\leq \kappa^+}$.
\end{corollary}

\section{Stationary Strong Measure Zero}

Finally, let us take a look at the following definition, introduced by Halko \cite{halko}:

\begin{definition}
	A set $X \subseteq 2^\kappa$ is called \textit{stationary strong measure zero} iff 
	\[
	\forall f \in \kappa^\kappa\, \exists (\eta_i)_{i < \kappa}:\ (\forall i < \kappa:\ \eta_i \in 2^{f(i)}) \wedge X \subseteq \bigcap_{cl \subseteq \kappa \text{ club}}\ \bigcup_{i \in cl} [\eta_i].
	\]
\end{definition}

So a set $X$ is stationary strong measure zero iff we can find coverings that cover every point of $X$ stationarily often. To motivate why this definition might be of interest, observe that even for regular strong measure zero sets, we can always find coverings that cover each point at least unboundedly often:

\begin{lemma} \label{lem: cover unboundedly}
	Let $X \subseteq 2^\kappa$ be strong measure zero. Then
	\[
	\forall f \in \kappa^\kappa:\ \exists (\eta_i)_{i < \kappa}:\ (\forall i < \kappa:\ \eta_i \in 2^{f(i)}) \wedge X \subseteq \bigcap_{j < \kappa} \bigcup_{i \geq j} [\eta_i].
	\]
\end{lemma}

\begin{proof}
	Partition $\kappa$ into sets $(U_i)_{i < \kappa}$, where each $U_i$ has size $\kappa$. For a challenge $f \in \kappa^\kappa$ and every $i < \kappa$ we can find coverings $(\eta_j^i)_{j \in U_i}$ that satisfy the challenge $(f(j))_{j \in U_i}$. But now $(\eta_j^i)_{j \in U_i, i < \kappa}$ has the property we are looking for.
\end{proof}

\begin{lemma} \label{lem: bounding ground club}
	Let $\mathcal{P}$ be a $\kappa^\kappa$-bounding forcing notion and $cl \in V^\mathcal{P}$ a club subset of $\kappa$. Then there is a club $cl' \in V$ with $cl' \subseteq cl$.
\end{lemma}

\begin{proof}
	In $V^\mathcal{P}$, let $h \in \kappa^\kappa$ enumerate $cl$ and $g \in \kappa^\kappa \cap V$ dominate $h$; note that $h$ is a continuous function. Define the functions 
	\begin{gather*}
		g'(0) = g(0),\ g'(\alpha + 1) = g(g'(\alpha)) \text{ and } g'(\lambda) = \sup_{i < \lambda} g'(i) \text{ for limit } \lambda \\
		h'(0) = h(0),\ h'(\alpha + 1) = h(g'(\alpha)) \text{ and } h'(\lambda) = \sup_{i < \lambda} h'(i) \text{ for limit } \lambda .
	\end{gather*}
	Let $\lambda$ be a limit. Then since $h'(\alpha) \leq g'(\alpha)$ and $g'(\alpha) \leq h'(\alpha + 1)$, we have $h'(\lambda) = g'(\lambda)$; furthermore we know $h'(\lambda) \in cl$ and $g \in V$, hence $(g'(\lambda))_{\lambda < \kappa, \lambda \text{ limit}}$ is a ground model club contained in $cl$.
\end{proof}

In the Corazza-type model from Section \ref{sec: second proof}, the notions of strong measure zero and stationary strong measure zero coincide.

\begin{theorem}
	$V^\mathbb{P} \models \forall X \subseteq 2^\kappa:\ X \in \mathcal{SN} \Leftrightarrow X \text{ is stationary strong measure zero}$.
\end{theorem}

\begin{proof}
	Modify the argument in Theorem \ref{th: easy inclusion aux} to show 
	\[
	V^\mathbb{P} \models \forall \alpha < \kappa^{++}: 2^\kappa \cap V^{\mathbb{P}_\alpha} \text{ is stationary strong measure zero}
	\]
	by instead showing the set
	\[
	D_{x,cl} := \{p \in \mathbb{Q}_\beta: \exists i \in cl:\ p \Vdash \dot{\sigma}(i) = x {\restriction} h(i)\}
	\]
	to be dense for every $x \in V^{\mathbb{P}_\alpha}$ and every ground model club $cl \subseteq \kappa$, where $\dot{\sigma}$ is as defined in Theorem \ref{th: easy inclusion aux}. As every club $cl \in V^\mathbb{P}$ contains a ground model club $cl'$ by Lemma \ref{lem: bounding ground club}, this is sufficient. To see that $D_{x,cl}$ is dense, merely note that for any $p \in \mathbb{Q}_\beta$ and $b \in [p] \cap V^{\mathbb{P}_\beta}$, the set 
	\[
	\{j < \kappa: b {\restriction} j \in \splitt(p)\}
	\]
	is a club and thus intersects $cl$.
\end{proof}

On the other hand, it follows from $|2^\kappa| = \kappa^+$ that there is a strong measure zero set which is not stationary strong measure zero.

\begin{theorem}
	Under $|2^\kappa|=\kappa^+$ there exists an $X \in \mathcal{SN}$ that is not stationary strong measure zero.
\end{theorem}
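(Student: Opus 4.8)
The plan is a transfinite recursion of length $\kappa^{+}$ producing $X=\{x_\zeta:\zeta<\kappa^{+}\}$ that is concentrated on a fixed small dense set — hence strong measure zero — while killing, for one cleverly chosen challenge $f^{*}$, every candidate witness for stationary strong measure zero. First note that the counterexample must have size exactly $\kappa^{+}$: if $\vert Y\vert\le\kappa$ then $Y$ is already stationary strong measure zero, since for a challenge $f$ one splits $\kappa$ into $\kappa$ pairwise disjoint stationary sets $(S_y)_{y\in Y}$ and sets $\eta_i:=y\restriction f(i)$ for $i\in S_y$, so that $\{i:y\in[\eta_i]\}\supseteq S_y$ is stationary.

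Using $\vert 2^\kappa\vert=\kappa^{+}$ and $\kappa^{<\kappa}=\kappa$, I fix a dense $D=\{d_\nu:\nu<\kappa\}\subseteq 2^\kappa$, I enumerate $\kappa^\kappa=\{g_\xi:\xi<\kappa^{+}\}$ (each $g$ coding the open neighbourhood $U_g:=\bigcup_{\nu<\kappa}[d_\nu\restriction g(\nu)]\supseteq D$ of $D$), and I fix a bookkeeping assigning to each stage $\zeta$ a candidate cover to be defeated, so that every cover is treated cofinally often. I then build $X$ so that at stage $\zeta$ the point $x_\zeta$ lies in $\bigcap_{\xi<\zeta}U_{g_\xi}$ and has non-stationary trace against the assigned cover. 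The first clause forces $X\setminus U_{g_\xi}\subseteq\{x_\zeta:\zeta\le\xi\}$, whence $\vert X\setminus U_{g}\vert\le\kappa$ for every $g$; this is exactly concentration on $D$, and a standard splitting of $\kappa$ into an $O$-part covering $D$ and an $E$-part covering the $\le\kappa$ leftover points shows that concentrated sets lie in $\mathcal{SN}$. The second clause guarantees that for $f^{*}$ no $(\eta_i)$ stationarily covers all of $X$, so $X$ is not stationary strong measure zero.

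The choice of $f^{*}$ and the compatibility of the two clauses are the heart of the matter. I take $f^{*}$ strictly increasing and \emph{limit-heavy}, meaning $f^{*}(\lambda)>\sup_{i<\lambda}f^{*}(i)$ at every limit $\lambda$, with large ``fresh blocks'' $B_\lambda:=[\sup_{i<\lambda}f^{*}(i),\,f^{*}(\lambda))$. The key lemma, and the main obstacle, is that for \emph{every} $(\eta_i)$ with $\eta_i\in 2^{f^{*}(i)}$ the set $Z:=\{x:\{i:x\restriction f^{*}(i)=\eta_i\}\text{ is stationary}\}$ is $\kappa$-meager. Granting this, both $\{x:T_x\text{ non-stationary}\}$ and $\bigcap_{\xi<\zeta}U_{g_\xi}$ are comeager (the latter by the $\kappa$-Baire theorem, valid since $\kappa^{<\kappa}=\kappa$), so their intersection is nonempty and $x_\zeta$ can be chosen there, distinct from the earlier points.

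I would prove the key lemma by reading the category quantifier on $2^\kappa$ as $<\kappa$-Cohen forcing and showing that it forces $T_{\dot x}$ to be non-stationary: on the club of closure points $\gamma$ of $f^{*}$ a full match $x\restriction f^{*}(\gamma)=\eta_\gamma$ entails in particular $x\restriction\gamma=\eta_\gamma\restriction\gamma$, yet $\gamma$ enters the construction only as an unpredictable supremum, so the already-committed initial segment generically disagrees, while the nonempty fresh block above $\gamma$ leaves room to seal the mismatch and thread an avoiding club. Limit-heaviness is essential here: at a \emph{continuous} resolution a $\diamondsuit_\kappa$-sequence would be a universal stationary guesser, making $Z=2^\kappa$, and it is exactly the fresh blocks — values that no single sequence can predict for every $x$ — that defeat such guessing. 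The delicate point I expect to fight with is ensuring that the avoiding club can be threaded against the \emph{new} clubs of the Cohen extension, which is precisely where the size of the blocks $B_\lambda$ must be exploited.
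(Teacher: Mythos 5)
Your key lemma is the gap, and it is not a fixable technicality: the lemma is \emph{false} in models of $\diamondsuit_\kappa$, which is consistent with the theorem's hypotheses ($\kappa$ inaccessible, $\vert 2^\kappa\vert=\kappa^+$ hold with $\diamondsuit_\kappa$ in $L$), and this failure occurs for \emph{every} challenge $f^*$, limit-heavy or not. Indeed, fix a $\diamondsuit_\kappa$-sequence $(e_\lambda)_{\lambda<\kappa}$, $e_\lambda\in 2^\lambda$, and define $\eta_i:=e_i{}^\frown\mathbf{0}$, i.e.\ $e_i$ followed by zeros on the fresh block $[i,f^*(i))$. Now let $(D_j)_{j<\kappa}$ be \emph{any} family of dense open sets. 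Build $x$ by the usual length-$\kappa$ recursion that meets every $D_j$ cofinally often, with one extra rule: at a limit stage $\lambda$ where the domain built so far is exactly $\lambda$ and the segment built so far equals $e_\lambda$, extend by zeros up to length $f^*(\lambda)$. The set of stages where the domain has caught up is a club, and $\diamondsuit_\kappa$ applied to the finished point $x$ gives stationarily many $\lambda$ with $x\restriction\lambda=e_\lambda$; at every $\lambda$ in the intersection the extra rule fired, so $x\restriction f^*(\lambda)=e_\lambda{}^\frown\mathbf{0}=\eta_\lambda$. Hence $x\in\bigcap_{j<\kappa}D_j$ and $T_x$ is stationary, so $Z$ meets every comeager set, i.e.\ $Z$ is non-meager. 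The moral is that limit-heaviness only prevents a single $(\eta_i)$ from stationarily guessing an \emph{adversarial} $x$ (this is why $Z\neq 2^\kappa$, and it is all the paper needs); meagerness of $Z$ would require defeating \emph{cooperative} points as well, and a point of a comeager set can be built so as to copy $\eta_\lambda$ on its fresh block at the guessing stages. Your Cohen-forcing heuristic breaks exactly where you feared: ``$T_{\dot x}$ is non-stationary'' quantifies over clubs of the extension, and for such non-Borel properties the dictionary between ``comeager'' and ``forced about the generic'' is simply not available.

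The surrounding architecture (concentration on a dense set for $\mathcal{SN}$, bookkeeping over the $\kappa^+$ candidate covers, one point per stage defeating the assigned cover, and the correct observation that sets of size $\le\kappa$ are stationary strong measure zero) is sound and close in spirit to the paper's; what must change is the justification of the stage-$\zeta$ step. You do not need $Z$ to be meager: you only need that the complement of $Z$ meets the one comeager set $\bigcap_{\xi<\zeta}U_{g_\xi}$ in play at stage $\zeta$, and that \emph{is} provable -- by hand, not by category. Build $x_\zeta$ as an increasing union of finite-support\ldots rather, ${<}\kappa$-length approximations, alternating between (i) extending into the next dense open set $U_{g_\xi}$ prescribed by bookkeeping and (ii) appending a single bit disagreeing with the assigned cover at the current coordinate, recording that coordinate into a set $cl$; at limit stages place the disagreement bit at the supremum, so that $cl$ is closed unbounded and disjoint from $T_{x_\zeta}$. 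This is precisely the paper's construction: it takes the challenge $f(i)=i+1$, whose ``fresh block'' is the single coordinate $i$ of $\sigma(i)\in 2^{i+1}$, and at stage $\alpha$ it threads $x_\alpha$ through the open dense tails $\bigcup_{j\ge i}[\tau_\beta(j)]$ of the previously chosen covers while diagonalizing against $\sigma_\alpha$ along a club built on the fly. (The paper also dispenses with the fixed dense set $D$: the covers $\tau_\beta$ themselves, chosen with open dense tails, play the role of your $U_{g_\xi}$, which yields $\mathcal{SN}$ directly without the concentrated-sets lemma.)
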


\begin{proof}
	First off, let us enumerate all strictly increasing functions in $\kappa^\kappa$ as $(f_\alpha)_{\alpha < \kappa^+}$ and likewise enumerate the set
	\[
	\mathcal{S} := \{\sigma \in (2^{<\kappa})^\kappa: \forall i < \kappa:\ \dom(\sigma(i)) = i + 1\}
	\]
	as $(\sigma_\alpha)_{\alpha < \kappa^+}$.
	
	We shall inductively construct three sequences $(x_\alpha)_{\alpha < \kappa^+}$, $(\tau_\alpha)_{\alpha < \kappa^+}$ and $(cl_\alpha)_{\alpha < \kappa^+}$ with the following properties:
	\begin{enumerate}[a)]
		\item $\forall \alpha < \kappa^+:\ x_\alpha \in 2^\kappa, \tau_\alpha \in (2^{<\kappa})^\kappa$ and $cl_\alpha$ is a club subset of $\kappa$ \label{enum: a}
		\item $\forall \alpha < \kappa^+\, \forall i < \kappa:\ \dom(\tau_\alpha(i)) = f_\alpha(i)$ \label{enum: b}
		\item $\forall \alpha < \kappa^+ \, \forall i < \kappa:\ \bigcup_{j \geq i} [\tau_\alpha(j)]$ is open dense \label{enum: c}
		\item $\forall \alpha < \kappa^+\, \forall \beta \leq \alpha:\ x_\beta \in \bigcup_{i < \kappa} [\tau_\alpha(i)]$ \label{enum: d}
		\item $\forall \beta < \kappa^+\, \forall \alpha < \beta:\ x_\beta \in \bigcup_{i < \kappa} [\tau_\alpha(i)]$ \label{enum: e}
		\item $\forall \alpha < \kappa^+:\ x_\alpha \notin \bigcup_{i \in cl_\alpha} [\sigma_\alpha(i)]$ \label{enum: f}
	\end{enumerate}
	
	Setting $X = \{x_\alpha: \alpha < \kappa^+\}$ yields a strong measure zero set (by \ref{enum: b}, \ref{enum: d} and \ref{enum: e}). However, $X$ is not stationary strong measure zero, since for the challenge $g: i \mapsto i+1$ property \ref{enum: f} ensures
	\[
	\forall \sigma \in \mathcal{S}\, \exists x \in X\, \exists cl \text{ club }:\ x \notin \bigcup_{i \in cl}[\sigma(i)].
	\]
	
	Suppose now, inductively, that $(x_\alpha)_{\alpha < \gamma}$, $(\tau_\alpha)_{\alpha < \gamma}$ and $(cl_\alpha)_{\alpha < \gamma}$ have been constructed for $\gamma < \kappa^+$. We wish to define $x_\gamma, \tau_\gamma$ and $cl_\gamma$. To this end, reindex $(x_\alpha)_{\alpha < \gamma}$ and $(\tau_\alpha)_{\alpha < \gamma}$ as $(\tilde{x}_{i+1})_{i < \kappa}$, $(\tilde{\tau}_{i+1})_{i < \kappa}$ \footnote{If $\gamma < \kappa$, use some $x$ and $\tau$ multiple times. For $\gamma = 0$ pick $x_0$ and $cl_0$ arbitrarily such that $x_0 \notin \bigcup_{i \in cl_0} [\sigma_0(i)]$.} and inductively construct $x_\gamma$ and $cl_\gamma$ by building up partial approximations $x_\gamma^j$ and $cl_\gamma^j$ for $j < \kappa$. Here $x_\gamma^j$ will  be a binary sequence of length at least $j+1$. 
	
	\begin{itemize}
		\item $j = 0$: Set $cl_\gamma^0 := 0$ and $x_\gamma^0 := \langle 1 - \sigma_\gamma(0)(0) \rangle$.
		\item $j \to j+1$: Since by assumption $\ell \mapsto \dom(\tilde{\tau}_{j+1}(\ell))$ is strictly increasing and $\bigcup_{\ell' \geq \ell} [\tilde{\tau}_{j+1}(\ell')]$ is open dense for all $\ell < \kappa$, we can find an $\ell^* > cl_\gamma^j$ with $x_\gamma^j \lhd \tilde{\tau}_{j+1}(\ell^*)$. Set $cl_\gamma^{j+1} := \dom(\tilde{\tau}_{j+1}(\ell^*))$ and $x_\gamma^{j+1} := \tilde{\tau}_{j+1}(\ell^*) ^\frown (1 - \sigma_\gamma(cl_\gamma^{j+1})(cl_\gamma^{j+1}))$.
		\item $\lambda$ is a limit: Set $cl_\gamma^\lambda := \sup_{j < \lambda} cl_\gamma^j$ and $x_\gamma^\lambda := (\bigcup_{j < \lambda} x_\gamma^j) ^\frown (1 - \sigma_\gamma(cl_\gamma^\lambda)(cl_\gamma^\lambda))$.
	\end{itemize}
	
	Now set $x_\gamma := \bigcup \{x_\gamma^j: j < \kappa\}$ and $cl_\gamma := \{cl_\gamma^j: j < \kappa\}$. In the construction we have ensured $x_\gamma \notin \bigcup_{j \in cl_\gamma} [\sigma_\gamma(j)]$ and $x_\gamma \in \bigcup_{j < \kappa} [\tilde{\tau}_{i+1}(j)]$ for all $i < \kappa$. Finally, it is elementary to construct $\tau_\gamma$ such that \ref{enum: b}, \ref{enum: c} and \ref{enum: d} holds.
\end{proof}

\newpage
\printbibliography
\end{document}